\renewcommand*{\backref}[1]{}
\renewcommand*{\backrefalt}[4]{%
    \ifcase #1 (Not cited.)%
    \or        (Cited on page~#2.)%
    \else      (Cited on pages~#2.)%
    \fi}
\numberwithin{equation}{section}
\newtheorem{thm}{Theorem}[section]
\newtheorem{lem}[thm]{Lemma}
\newtheorem{prop}[thm]{Proposition}
\newtheorem{cor}[thm]{Corollary}
\newtheorem{defn}[thm]{Definition}
\newtheorem{exmp}[thm]{Example}
\newtheorem{qsn}[thm]{Question}
\newtheorem{rem}[thm]{Remark}
\newtheorem*{thankyou}{Acknowledgements}
\newtheorem*{thmA}{Theorem A}
\newtheorem*{thmB}{Theorem B}
\newtheorem*{thmC}{Theorem C}
\newtheorem*{thmD}{Theorem D}
\theoremstyle{remark}
\newtheorem*{notation}{Notation}
\renewcommand{\P}{\mathbb{P}}
\newcommand{\Z}{\mathbb{Z}}
\newcommand{\R}{\mathbb{R}}
\newcommand{\C}{\mathbb{C}}
\renewcommand{\O}{\mathcal{O}}
\newcommand{\Nef}{\mathrm{Nef}}
\newcommand{\Pic}{\mathrm{Pic}}
\newcommand{\F}{\mathcal{F}}
\newcommand{\KC}{\mathcal{K}}
\title{Higher Gaussian Maps on K3 surfaces.}
\author{Angel David Rios Ortiz}
\address{Sapienza Universita di Roma, Dipartimento di Matematica, Piazzale Aldo Moro 5, 00185 Roma}
\email{rios@mat.uniroma1.it}
\address{Max-Planck-Institut f\"{u}r Mathematik in den Naturwissenschaften, Inselstrasse 22, 04103 Leipzig, DE.}
\email{arios@mis.mpg.ide}
\begin{document}

\maketitle

\begin{abstract}
  We give sufficient conditions for the surjectivity of higher Gaussian maps on a polarized K3 surface. As an application, we show that the $k$-th Gaussian map for a general curve of genus $g$ (that depends quadratically with $k$) is surjective. Along the proof we also exhibit an ampleness criterion for divisors in the Hilbert scheme of two points of a K3 surface.
\end{abstract}

\section{Introduction}

For a smooth curve $C$, the so-called first Gaussian (or Wahl) map is the morphism
\begin{equation}\label{eq:wahlmap}
\gamma^1_C : H^0(C, \omega_C) \wedge H^0(C, \omega_C) \to H^0(C, \omega_C^{\otimes 3})
\end{equation}
defined essentially by $\gamma^1_C(s\wedge t) = s\mathrm{d}t - t\mathrm{d}s$. This map has been studied intensively by several authors because it is related to the deformation and extendability properties of the curve in its canonical embedding. The striking properties of this map were first put in evidence by a theorem of Wahl \cite{Wahl87}, saying that if $C$ lies as a hyperplane section of some K3 surface, then $\gamma^1_C$ is never surjective. This last fact contrasts with \cite{CHM88}, where the authors prove that for the \emph{general} curve of genus $g\geq 10$ (by a dimension count this is the smallest possible value of $g$) and $g\neq 11$ (the generic curve of genus $11$ lies on a K3 surface) the Wahl map \emph{is} surjective (see also \cite[Theorem 1]{Voisin92} for a different proof). Moreover, a recent result due to Arbarello, Bruno and Sernesi \cite{ABS17} actually characterizes Brill-Noether-Petri general curves that lie as hyperplane sections on a K3 surface, or on a limit thereof, as \emph{exactly} those where the Wahl map is not surjective.

The Wahl map can be defined for any variety and every line bundle; and can be seen as the first instance of a hierarchy of maps called the \emph{higher Gaussian maps} of a variety, see \cite{Wahl92} or Section \ref{section:gaussianandhilbertscheme} for the relevant definitions. In particular, the second Gaussian map $\gamma^2_C$ has been studied extensively \cite{CF09},\cite{ColFred10}, \cite{CPT01} for its relation with the curvature of $M_g$ in $A_g$. It was proved in \cite{CCM11} that for \emph{general} curves of genus $g\geq 18$ (this is the minimum possible) the second Gaussian map is surjective. In \cite{CCM11} the authors ask about the rank of the higher Gaussian maps for a general curve in $M_g$. As one of the main results of this paper, we will give a partial (and asymptotically optimal) answer to the above question.

In this work, we will relate the surjectivity of higher Gaussian maps of a regular surface $S$ equiped with a line bundle $L$ with the cohomology of linear systems $L-(k+2)\delta$ on the Hilbert scheme of two points on $S$. Here we are also denoting by $L$ the induced line bundle on the Hilbert scheme (see Section \ref{sec:linearsystemsonthehilbertscheme} for the notations in the theorem). Our first result is the following.
\begin{thmA}\label{thm:thmA}
Let $S$ be a projective surface with $h^1(S,\O_S) = 0$ and let $L$ be a line bundle on $S$. If $H^1(S^{[2]},L-(k+2)\delta) = 0$ then $\gamma^k_L$ is surjective.
\end{thmA}

Let $S$ be now a K3 surface. Building on the work of Bayer-Macrì on the birational geometry of $S^{[2]}$, we give a criterion for ampleness for divisors on $S^{[2]}$ that depends only on the geometry of $S$. Essentially a class will be ample if there are no rational nor elliptic curves of \emph{low degree} lying on $S$.

\begin{thmB}
Let $(S,L)$ be a polarized K3 surface with $L^2 = 2d$. Suppose $d>\frac{4a^2}{3}$, then the class $L-a\delta$ is movable if and only if there are no effective divisors $D$ in $S$ such that $D^2 = 0$ and $L\cdot D\leq 2a$. If $d>\frac{9a^2}{4}$, then the class $L-a\delta$ is ample if and only if it is movable and there are no effective divisors $D$ in $S$ such that $D^2 = -2$ and $L\cdot D < a$.
\end{thmB}

The Hilbert scheme of two points on a K3 surface is also an example of a \emph{hyperk\"ahler manifold}. Hence the knowledge of several vanishing theorems for line bundles on hyperk\"ahler varieties along with the structure of the nef cone let us prove surjectivity of all higher Gaussian maps for general K3 surfaces. 

\begin{thmC}
Let $(S,L)$ be a polarized K3 surface of degree $2d$ with $\Pic(S) = \Z L$, then $\gamma^k_L$ is surjective for all $k>0$.
\end{thmC}

Hyperplane sections of polarized $K3$ surfaces are canonical curves. Our main application of the previous theorem gives surjectivity of higher Gaussian maps of canonical curves, thus solving the problem raised in \cite{CCM11} for high enough genus.

\begin{thmD}
Let $k>1$ be an integer. Then for a general curve of genus $g> 4(k+2)^2 + 2$ the $k$-th higher Gaussian map is surjective.
\end{thmD}

Observe that although the bound is not optimal, it is very close to it, in the sense that is quadratic in $k$.

This paper is organized as follows: In Section \ref{section:gaussianandhilbertscheme} after some preliminaries we will study the relation between higher Gauss maps and linear series on the Hilbert Scheme of $2$ points on a regular surface, proving Theorem A in \ref{thm:vanish}. Since we will use some terminology on equivariant sheaves, for completeness we include the relevant definitions in the Appendix \ref{section:equivariantsheaves}.

Starting from Section \ref{sec:amplenesscriterion} we specialize to K3 surfaces. In all the rest of the paper the celebrated results of Bayer and Macrì \cite{BMMMP14} will play a central role, along with hyperk\"ahler geometry. We will prove in Corollaries \ref{cor:movablehighdegree} and \ref{cor:amplehighdegree} that, for high-enough degree, ampleness abuts essentially to the existence of certain low-degree elliptic and rational curves lying on the K3 surface. For some applications it might be ideal to discard the assumption on the degree, for this we included Propositions \ref{prop:l-delta} and \ref{prop:lminus2delta} where we put in evidence the technique to check ampleness without assumptions on the degree.

In Section \ref{section:gaussianmapsk3} we first consider the case of a \emph{general} K3 surface, and then prove Theorem C in \ref{thm:surjectivitygaussmapsK3}. The main application (Theorem D) is proved in \ref{thm:higherGauss}. We also consider the non-general case, where using the results of Section \ref{sec:amplenesscriterion} we give a criterion for surjectivity of gaussian maps in Theorem \ref{thm:highergaussmapsgeneral}. We conclude in Section \ref{section:furtherremarks} with further remarks and research directions.

\begin{thankyou}
It is a pleasure to thank my Ph.D. supervisor Kieran O'Grady for all his support during these years. Thanks also to Edoardo Sernesi and Gianluca Pacienza for their suggestions and comments on a first draft of this work. I would like to thank Fabrizio Anella and Simone Novario for all the useful conversations. Finally thanks also goes to the anonymous referees for their detailed comments and suggestions.
\end{thankyou}

\section{Gaussian maps and linear systems on the Hilbert scheme}\label{section:gaussianandhilbertscheme}

The following definitions of the higher Gaussian maps were first introduced by Wahl in \cite{Wahl90}. We will be mainly following \emph{loc.cit.} and also the survey \cite{Wahl92}. It is worth pointing out that there are some discrepancies in the literature about the definition of Gaussian maps.

\subsection{Higher Gaussian maps}

Let $(X,L)$ be a polarized projective variety. On $X\times X$ we consider the external product $L\boxtimes L:= \text{pr}_1^*L\otimes\text{pr}_2^*L$, where $\text{pr}_i$ is the projection to the $i$-th factor. Denote by $\mathcal{I}_{\Delta_X}$ the ideal sheaf of the diagonal $\Delta_X$ in $X\times X$. Recall that (cf. \cite[II.8]{Hart77}) there is a canonical isomorphism $(\mathcal{I}_{\Delta_X}/\mathcal{I}_{\Delta_X}^{2})|_
{\Delta_X} \cong \Omega^1_X$, hence for every $k\geq 0$ there is an isomorphism $(\mathcal{I}_{\Delta_X}^{k}/\mathcal{I}_{\Delta_X}^{k+1})|_{\Delta_X} \cong \mathrm{Sym}^k(\Omega^1_X)$. The global sections of $\mathcal{I}^k_{\Delta_X}(L\boxtimes L)$ define a filtration of $H^0(X,L\boxtimes L)$ due to the following exact sequence:
\begin{equation}\label{eq:gauss}
\begin{tikzcd}
0\ar[r] & \mathcal{I}_{\Delta_X}^{k+1}(L\boxtimes L)\ar[r] & \mathcal{I}_{\Delta_X}^{k}(L\boxtimes L)\ar[r] & \text{Sym}^k(\Omega^1_X)(2L)\ar[r] & 0.
\end{tikzcd}
\end{equation}
Where for the last identification we used that $(L\boxtimes L)|{\Delta_X} \cong \O_X(2L)$.
\begin{defn}
For any $k\geq 0$ the $k$-th Gaussian map $\gamma^k_L$ is the induced map on global sections
\[
\gamma^k_L: H^0(X\times X,  \mathcal{I}_{\Delta_X}^{k} (L\boxtimes L))\to H^0(X,\text{Sym}^k(\Omega^1_X)(2L))
\]
of the exact sequence \eqref{eq:gauss} above.
\end{defn}

\begin{rem}\label{rem:domaingauss}
By definition, for each $k\geq 1$, the domain of $\gamma^k_L$ is $\ker(\gamma^{k-1}_L)$. 
\end{rem}

The Gaussian maps are \emph{functorial}; if $f:X\to Y$ is a regular map between smooth varieties and $L$ is a line bundle on $Y$, then there exists a commutative diagram:
\begin{equation}
\begin{tikzcd}
H^0(Y\times Y,\mathcal{I}_{\Delta_Y}^k(L\boxtimes L))\ar[r,"\gamma^k_{L}"]\ar[d,"f^*"] & H^0(Y,\text{Sym}^k\Omega^1_Y(2L))\ar[d,"d^kf"]\\
 H^0(X\times X,\mathcal{I}_{\Delta_X}^k(f^*(L)\boxtimes f^*(L)))\ar[r,"\gamma_{f^*(L)}^k"] & H^0(X, \text{Sym}^k\Omega^1_X (2f^*(L)))
\end{tikzcd}
\end{equation}

where the vertical map on the right is induced by the pullback of (twisted) symmetric differentials.

Let us make an explicit description of the first and second Gaussian maps to convey a more geometric meaning to the definition. First notice that for $k = 0$, the morphism $\gamma^0_L$ is given by multiplication of sections, therefore by the decomposition
\begin{equation}
 H^0(X,L)\otimes H^0(X,L) \cong \bigwedge^2H^0(X,L)\oplus \text{Sym}^2(H^0(X,L))
\end{equation}

the alternating tensors are mapped to zero. The kernel of $\gamma^0_L$ is then given by $\ker(\gamma^0_L) \cong \bigwedge^2H^0(S,L)\oplus I_2(X,L)$ where we let
$I_2(X,L)$ be the kernel of $\gamma^0_L$ restricted to $\text{Sym}^2(H^0(S,L))$. If we identify $\text{Sym}^2(H^0(X,L))$ with $H^0(\P^N,\O_{\P^N}(2))$, then $\gamma^0_L$ is the restriction of sections. Note that surjectivity of $\gamma^0_L$ is equivalent to $X$ being generated by quadrics. Denote by $I_2(X,L)$ the space of quadrics containing $X$. We have that 
\[
\ker(\gamma^0_L) \cong \bigwedge^2H^0(X,L)\oplus I_2(X,L)
\]
is the domain of $\gamma^1_L$ (see Remark \ref{rem:domaingauss}), and the map $\gamma^1_L$ vanishes on symmetric tensors, therefore there is an isomorphism
\[
\ker(\gamma^1_L)\cong  \ker(\bigwedge^2H^0(X,L)\to H^0(X,\Omega^1_X(2L))) \oplus I_2(X,L).
\]
Hence the morphism $\gamma_L^1$ is surjective if and only if the map
\[
\bigwedge^2H^0(X,L)\to H^0(X,\Omega^1_X(2L))
\]
is surjective.

\begin{exmp}
Let $C$ be a curve and $L = K_C$, then the $k$-th Gaussian map is given as
\[
\gamma^k_{K_C}: H^0(C\times C,  \mathcal{I}_{\Delta^{k}_C}\otimes (K_C\boxtimes K_C))\to H^0(C, (k+2)K_C).
\]
When $k=0$, this map is given by multiplication on sections, when $k=1$ this map is the Wahl map, i.e. the morphism \eqref{eq:wahlmap}. We will usually write $\gamma^k_C$ when referring to $\gamma^k_{K_C}$.
\end{exmp}

\subsection{Linear Systems on the Hilbert Scheme}\label{sec:linearsystemsonthehilbertscheme}

Let $S$ be a regular surface (i.e. $H^1(S,\O_S) = 0$) and let $S^{[2]}$ be the Hilbert scheme of $2$ points on $S$, then by \cite[Theorem 6.2]{Fogarty73} the Picard group of $S^{[2]}$ is given as follows:
\begin{equation}\label{eq:pics2}
\text{Pic}(S^{[2]}) \cong \text{Pic}(S)\oplus \Z\delta,
\end{equation}
where $2\delta$ is the exceptional divisor of the blowup  $\mu:S^{[2]}\to S^{(2)}$ of the symmetric product along the diagonal $\Delta$. If $\pi: S\times S\to S^{(2)}$ is the quotient map, we will denote by $\Delta_S$ the diagonal in $S\times S$ and by $\Delta$ its image in $S^{(2)}$. For any line bundle $L$ on $S$, there exists a line bundle $L^{(2)}$ in $S^{(2)}$, such that $L\boxtimes L = \pi^*(L^{(2)})$. hence its pullback $\mu^*(L^{(2)})$ is a line bundle on $S^{[2]}$ (cf. \cite[Remark 2.1]{Scala20}).

\begin{notation}
For any line bundle $L$ on $S$ we will also denote by $L$ the induced line bundle on $S^{[2]}$ via the canonical identification \eqref{eq:pics2}.
\end{notation}

With the same notation as above, the universal family $\widetilde{S\times S}$ for $S^{[2]}$ is the blowup of the diagonal $\Delta_S$ in $S\times S$. We obtain the following commutative diagram:
\begin{equation}\label{eq:commutativediagram}
\begin{tikzcd}
\widetilde{S\times S}\ar[r, "q"]\ar[d, "p"] & S^{[2]}\ar[d,"\mu"]\\
S\times S\ar[r, "\pi"] & S^{(2)}
\end{tikzcd}    
\end{equation}
The morphism $q$ is the branched cover along the divisor $2\delta$ in $S^{[2]}$ and the vertical maps are blow-ups along the respective diagonals. We will  use the notation introduced throughout this section. In the following, we will also use the language of $\Z/2\Z$-equivariant sheaves, cf. Appendix \ref{section:equivariantsheaves} for the relevant definitions. 

\begin{notation}
The group $\Z/2\Z$ acts on $S\times S$ by permuting the factors. We will denote by $\varepsilon: \Z/2\Z\to \C^*$ the alternating character.
\end{notation}

\begin{prop}\label{prop:pushforwardpowersidealdiagonal}
With the same notation as above, we have:
\begin{enumerate}
    \item $\mu_*(-k\delta)\cong \mathcal{I}_{\Delta}^k\otimes\varepsilon^k$, and
    \item $\pi_*(\mathcal{I}_{\Delta_S}^k)\cong (\mathcal{I}_{\Delta}^k\otimes\varepsilon^k)\oplus (\mathcal{I}_{\Delta}^{k+1}\otimes\varepsilon^{k+1})$.
\end{enumerate}
\end{prop}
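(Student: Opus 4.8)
The plan is to work with the diagram \eqref{eq:commutativediagram} and reduce everything to a local computation around the diagonal, using the fact that away from the diagonal $\pi$ is an étale double cover (so both statements are transparent there) and that all sheaves involved are torsion-free, hence determined by their behavior in codimension $1$ together with a local model near $\Delta_S$.

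For part (1), I would first recall that $\mu: S^{[2]} \to S^{(2)}$ is the blow-up of $S^{(2)}$ along the diagonal $\Delta$, with exceptional divisor $2\delta$; more precisely $\O_{S^{[2]}}(-2\delta)$ is the pullback of the ideal $\I_\Delta \subset \O_{S^{(2)}}$, and $\mu_*\O_{S^{[2]}}(-2n\delta) \cong \I_\Delta^n$ for all $n \geq 0$ by the standard properties of blow-ups of smooth subvarieties (the relevant Rees-algebra computation). The subtlety is the \emph{odd} twists $-k\delta$. Here I would use that $q: \widetilde{S\times S} \to S^{[2]}$ realizes $S^{[2]}$ as the quotient of $\widetilde{S\times S}$ by $\Z/2\Z$, with $q^*(\delta) = E$ the exceptional divisor of $p: \widetilde{S\times S}\to S\times S$, and that $q_*\O_{\widetilde{S\times S}}(-kE) \cong \O_{S^{[2]}}(-\lceil k/2\rceil \cdot 2\delta \ \text{or}\ \dots)$ decomposed into $\varepsilon$-eigensheaves: the invariant part gives $\O(-k\delta)$ when $k$ is even and the $\varepsilon$-part gives $\O(-k\delta)\otimes\varepsilon$ when $k$ is odd, because a section of $\O(-k\delta)$ pulls back to a section of $\O(-kE)$ that is $(-1)^k$-symmetric. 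Composing with $\mu_*$ and using $\mu_* q_* = \pi_* p_*$ together with $p_*\O_{\widetilde{S\times S}}(-kE) = \I_{\Delta_S}^k$ (again the blow-up/Rees computation, valid since $\Delta_S$ is a smooth codimension-$2$ center) will yield $\mu_*\O_{S^{[2]}}(-k\delta) = (\pi_*\I_{\Delta_S}^k)^{\Z/2\Z\text{-part matching }\varepsilon^k}$. To identify this with $\I_\Delta^k\otimes\varepsilon^k$ on $S^{(2)}$, I would check it locally: in analytic coordinates where $S\times S$ has coordinates $(x,y)$, the involution swaps them, $\I_{\Delta_S} = (x-y)$, and $S^{(2)}$ has coordinates $(x+y, xy)$ with $\Delta = (\text{discriminant})$; then $\I_{\Delta_S}^k = ((x-y)^k)$, whose symmetric/antisymmetric part over the invariants is exactly $\I_\Delta^{\lceil k/2\rceil}$ times the appropriate sign character — matching the claim once one sorts out the exponent carefully.

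For part (2), I would simply push forward along $\pi$ directly: $\pi_*\I_{\Delta_S}^k$ is a rank-$2$ sheaf (since $\pi$ is finite of degree $2$) on $S^{(2)}$, and $\Z/2\Z$ acts on it, so it decomposes as (invariant part) $\oplus$ ($\varepsilon$-part). The invariant part is, by definition, $(\pi_*\I_{\Delta_S}^k)^{\Z/2\Z} = \mu_*\O_{S^{[2]}}(-?\delta)$ and the $\varepsilon$-part is the corresponding eigensheaf; combining with part (1) applied twice (once for exponent $k$ with character $\varepsilon^k$, once for $k+1$ with $\varepsilon^{k+1}$) gives $\pi_*\I_{\Delta_S}^k \cong (\I_\Delta^k\otimes\varepsilon^k)\oplus(\I_\Delta^{k+1}\otimes\varepsilon^{k+1})$. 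The cleanest way to see which eigensheaf gets which exponent is again the local model: $\C[x,y]$ as a module over $\C[x+y,xy]$ with the given $(x-y)^k$-twist decomposes with invariants spanned (over the base) by the even powers of $(x-y)$ starting at $(x-y)^k$ or $(x-y)^{k+1}$ according to the parity of $k$, and the anti-invariants by the odd ones, and reassembling these eigenspaces over $S^{(2)}$ recovers exactly the two ideal powers with their sign twists.

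The main obstacle I anticipate is bookkeeping the ceiling/parity: making sure that $\mu_*\O_{S^{[2]}}(-k\delta)$ is genuinely $\I_\Delta^k\otimes\varepsilon^k$ (with the ideal power indexed by $k$, not by $\lceil k/2\rceil$) — i.e. verifying that $\delta$ is the "square root of the exceptional divisor" in the correct normalization so that the filtration steps by $\I_{\Delta_S}$ and not by $\I_{\Delta_S}^2$ match the steps by $\delta$ rather than $2\delta$. This is precisely where the equivariant structure is indispensable: the $\varepsilon$-twist is what distinguishes consecutive powers, and the local model $(x-y)^k$ vs $(x-y)^{k+1}$ pins down the correspondence. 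Everything else is a standard blow-up/Rees-algebra and étale-descent computation, and all sheaves being torsion-free means it suffices to verify the isomorphisms on the complement of a codimension-$2$ locus plus the one local model along the generic point of $\Delta$.
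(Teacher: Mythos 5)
Your part (2) follows the same skeleton as the paper's proof: push forward along $\pi$, use $\pi\circ p=\mu\circ q$, $p_*\mathcal{O}(-kE)\cong\mathcal{I}_{\Delta_S}^k$, $q^*(k\delta)=kE$, $q_*\mathcal{O}_{\widetilde{S\times S}}\cong\mathcal{O}_{S^{[2]}}\oplus\mathcal{O}_{S^{[2]}}(-\delta)$ and the projection formula, and then apply item (1) twice. But note that the paper does \emph{not} prove item (1) at all: it is quoted as a special case of Scala's theorem \cite[Theorem 2.8]{Scala20}. So the real substance of your proposal is your attempted proof of (1), and that is where there is a genuine gap.

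Two concrete problems. First, your local model is the curve model: you take coordinates $(x,y)$ with $\mathcal{I}_{\Delta_S}=(x-y)$ principal, but for a surface $\Delta_S$ has codimension $2$, and $S^{(2)}$ is \emph{singular} along $\Delta$ (transverse $A_1$-points); hence $\mu$ is a blow-up with singular ambient space and the ``standard properties of blow-ups of smooth subvarieties''/Rees-algebra identity $\mu_*\mathcal{O}(-2n\delta)\cong\mathcal{I}_\Delta^n$ is not available off the shelf. Second, and more seriously, the parity bookkeeping you explicitly defer is precisely where the sketch breaks down. In the correct local model (antidiagonal coordinates $v=(v_1,v_2)$, involution $v\mapsto-v$, $\mathcal{I}_{\Delta_S}=(v_1,v_2)$, invariants generated by $v_1^2,v_1v_2,v_2^2$), the invariant part of $\pi_*\mathcal{I}_{\Delta_S}^k$ is the even part of $(v_1,v_2)^k$, which in terms of literal powers of the ideal of $\Delta\subset S^{(2)}$ is $\mathcal{I}_\Delta^{\lceil k/2\rceil}$, not $\mathcal{I}_\Delta^k$, while the odd eigenparts (for $k=1$, the module generated by $v_1,v_2$, which has two minimal generators against three for $\mathcal{I}_\Delta$) are not literal ideal powers at all. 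So the identification of the $\varepsilon^k$-eigensheaf with the sheaf denoted $\mathcal{I}_\Delta^k\otimes\varepsilon^k$, in the precise sense in which Scala states it, is exactly the nontrivial content of item (1); your argument assumes it ``sorts out'' rather than proving it. Finally, the reduction ``check away from a codimension-two locus plus the generic point of $\Delta$'' is not legitimate here: these pushforwards are torsion-free but not reflexive and differ from $\mathcal{O}_{S^{(2)}}$ exactly in codimension two, so they are not determined by such data; one needs either the paper's global functorial identities (as in its proof of (2)) or a computation on a full neighbourhood of $\Delta$ against a globally defined comparison map.
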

\begin{proof}
Item (1) is a particular case of a result of Scala, see \cite[Theorem 2.8]{Scala20}. For completeness we will give a proof. By Remark \ref{rem:sheafofinvariants} we have that 
\[
\mu_*(-k\delta)\cong \pi_*(\pi^*(\mu_*(-k\delta)))^{\Z/2\Z}.
\]
By flat base change applied to the diagram \eqref{eq:commutativediagram} we have that $\pi_*(\pi^*(\mu_*(-k\delta)))\cong \pi_*(q_*p^*(-k\delta))$.
Since $q$ is a branched cover we have $kE = q^*(k\delta)$. Let $E$ be the exceptional divisor on $\widetilde{S\times S}$, then by \cite[Lemma  4.3.16]{PAG1} there is a canonical isomorphism  $p_*(-kE) \cong \mathcal{I}_{\Delta_S}^k$. Therefore $p_*q^*(-k\delta)\cong \mathcal{I}_{\Delta_S}^k$. Taking the invariant part of the pushforward by $\pi$ yields the result.

To prove Item (2) we use that $q$ is a branched covering along the smooth divisor $2\delta$ in $S^{[2]}$, in particular \cite[Remark 4.1.7]{PAG1} we have that $q_*\O_{\widetilde{S\times S}}\cong \O_{S^{[2]}}\oplus \O_{S^{[2]}}(-\delta)$. Therefore:
\[
 \pi_*(\O_{S\times S}) \cong \pi_*p_*(\O_{\widetilde{S\times S}}) = \mu_*q_*(\O_{\widetilde{S\times S}}) \cong \mu_*(\O_{S^{[2]}}\oplus \O_{S^{[2]}}(-\delta)).
\]
The first isomorphism holds because $p$ is birational and $S\times S$ is normal. Also $\mu$ is birational and $S^{(2)}$ normal, hence $\mu_*(\O_{S^{[2]}}) \cong \O_{S^{(2)}}$. By Item (1) we get 
\begin{equation}\label{eq:pushforwardstructuresheaf}
\mu_*(\O_{S^{[2]}}\oplus \O_{S^{[2]}}(-\delta)) \cong \O_{S^{(2)}}\oplus (\mathcal{I}_{\Delta}\otimes\varepsilon).
\end{equation}
Since $q$ is a branched cover we have $kE = q^*(k\delta)$. Hence
\[
\pi_*(\mathcal{I}_{\Delta_S}^k) \cong \pi_*p_*(-kE) = \mu_*q_*(-kE) =  \mu_*q_*q^*(-k\delta),
\]
and we conclude using the projection formula and the isomorphism \eqref{eq:pushforwardstructuresheaf}.
\end{proof}

\begin{cor}\label{cor:pushpowersideal}
With the same notation as above, we have:
\[
\pi_*(\mathcal{I}_{\Delta_S}^k(L\boxtimes L))\cong ((\mathcal{I}_{\Delta}^k\otimes\varepsilon^k)\oplus (\mathcal{I}_{\Delta}^{k+1}\otimes\varepsilon^{k+1}))\otimes \O_{S^{(2)}}(L^{(2)}).
\]
Morever, this decomposition corresponds to the splitting in \eqref{eq:equivsheafsplits}.
\end{cor}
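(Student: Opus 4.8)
The plan is to reduce the statement to the already-proved Proposition~\ref{prop:pushforwardpowersidealdiagonal}(2) by a projection-formula argument. First I would observe that $\pi$ is a finite morphism, so $\pi_*$ commutes with tensoring by a locally free sheaf pulled back from $S^{(2)}$; concretely, for any quasi-coherent sheaf $\F$ on $S\times S$ and any line bundle $M$ on $S^{(2)}$ one has $\pi_*(\F\otimes\pi^*M)\cong \pi_*(\F)\otimes M$. Applying this with $\F = \I_{\Delta_S}^k$ and $M = \O_{S^{(2)}}(L^{(2)})$, and using $\pi^*(L^{(2)}) = L\boxtimes L$, gives
\[
\pi_*(\I_{\Delta_S}^k(L\boxtimes L)) \cong \pi_*(\I_{\Delta_S}^k)\otimes \O_{S^{(2)}}(L^{(2)}).
\]
Then I would substitute the isomorphism $\pi_*(\I_{\Delta_S}^k)\cong (\I_{\Delta}^k\otimes\varepsilon^k)\oplus(\I_{\Delta}^{k+1}\otimes\varepsilon^{k+1})$ from Proposition~\ref{prop:pushforwardpowersidealdiagonal}(2) and distribute the tensor product over the direct sum, which yields exactly the claimed formula.

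The second assertion, that this decomposition is the one coming from \eqref{eq:equivsheafsplits} (the canonical isotypic decomposition of the pushforward of a $\Z/2\Z$-equivariant sheaf into invariant and anti-invariant parts), requires tracking the equivariant structure through the above isomorphisms. Here I would note that $L\boxtimes L$ carries a natural $\Z/2\Z$-linearization (the one whose invariants are $L^{(2)}$, as recalled before the corollary), and that $\I_{\Delta_S}^k$ is $\Z/2\Z$-equivariant because the diagonal is preserved by the swap. The projection-formula isomorphism is $\Z/2\Z$-equivariant — tensoring by the pullback of $\O_{S^{(2)}}(L^{(2)})$, on which the group acts trivially after descent, does not disturb the linearization — so the isotypic components of $\pi_*(\I_{\Delta_S}^k(L\boxtimes L))$ are obtained from those of $\pi_*(\I_{\Delta_S}^k)$ by tensoring with $\O_{S^{(2)}}(L^{(2)})$. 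It then remains to match this with the splitting already identified in Proposition~\ref{prop:pushforwardpowersidealdiagonal}(2), i.e. that $\I_{\Delta}^k\otimes\varepsilon^k$ is the $\varepsilon^k$-isotypic part; this is immediate from the way that proposition was proved (via $q_*\O\cong\O\oplus\O(-\delta)$ and Item~(1)), since the factor $\varepsilon^k$ records precisely the character by which $\Z/2\Z$ acts.

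The main obstacle is bookkeeping rather than mathematics: one must be careful that the character twists $\varepsilon^k$ in Proposition~\ref{prop:pushforwardpowersidealdiagonal} are literally the equivariant-structure data, and that tensoring by $\O_{S^{(2)}}(L^{(2)})$ genuinely pulls back to $L\boxtimes L$ with its standard linearization (rather than a twist of it by $\varepsilon$). Both points are settled by the discussion preceding the corollary, where $L^{(2)}$ is defined as the sheaf of $\Z/2\Z$-invariants of $L\boxtimes L$; so no sign ambiguity enters. Once this is checked, distributing the tensor product over the direct sum and invoking \eqref{eq:equivsheafsplits} finishes the proof in a couple of lines.
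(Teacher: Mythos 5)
Your proposal is correct and follows essentially the same route as the paper: projection formula together with Proposition~\ref{prop:pushforwardpowersidealdiagonal}(2), using $\pi^*(L^{(2)})=L\boxtimes L$. The only cosmetic difference is in the ``moreover'' part, where the paper simply invokes the uniqueness of the isotypic splitting in Lemma~\ref{lem:equivsheafsplits} while you track the $\Z/2\Z$-linearizations through the projection-formula isomorphism --- both arguments establish the same matching.
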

\begin{proof}
The result is a consequence of the projection formula and Proposition \ref{prop:pushforwardpowersidealdiagonal}, the decomposition must correspond to the splitting by uniqueness in Lemma \ref{lem:equivsheafsplits}.
\end{proof}

\begin{prop}\label{prop:symkomega1}
With the same notation, the higher direct images of $\pi$ vanish, and the restriction of $\pi$ to $\Delta_S$ induces an isomorphism: 
\[
\pi_*(\text{Sym}^k(\Omega^1_{\Delta_S}))\cong\text{Sym}^k(\Omega^1_\Delta)\cong \text{Sym}^k(\Omega^1_S).
\]
The action of $\Z/2\Z$ on the fibers of $\text{Sym}^k(\Omega^1_S)$ is given by the representation $\varepsilon^{k}\otimes \text{Sym}^k(\C^2)$.
\end{prop}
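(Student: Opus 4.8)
The plan is to exploit that $\pi\colon S\times S\to S^{(2)}$ is a finite degree-two cover and to make it completely explicit in local coordinates near the diagonal. Since $\pi$ is finite it is affine, so $R^{i}\pi_{*}=0$ for every $i>0$ on quasi-coherent sheaves; this gives the asserted vanishing of the higher direct images, and in particular it guarantees that $\pi_{*}$ of a short exact sequence of sheaves (such as \eqref{eq:gauss}) stays exact. For the isomorphism the key point is that the involution $\sigma$ exchanging the two factors fixes $\Delta_{S}$ pointwise, so $\pi|_{\Delta_{S}}$ is injective on points and, as I will check below, is in fact a closed immersion of $\Delta_{S}$ onto the reduced diagonal $\Delta\subset S^{(2)}$. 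Granting this, $\pi_{*}(\mathrm{Sym}^{k}(\Omega^{1}_{\Delta_{S}}))$ is simply $\mathrm{Sym}^{k}(\Omega^{1}_{\Delta_{S}})$ transported along $\pi|_{\Delta_{S}}$, i.e.\ $\mathrm{Sym}^{k}(\Omega^{1}_{\Delta})$, and the diagonal isomorphism $S\xrightarrow{\ \sim\ }\Delta$ identifies it with $\mathrm{Sym}^{k}(\Omega^{1}_{S})$.

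To justify the claim and, simultaneously, to pin down the $\Z/2\Z$-action I would pass to local coordinates $(x_{1},x_{2})$ and $(y_{1},y_{2})$ on the two factors and set $u_{i}=x_{i}+y_{i}$, $v_{i}=x_{i}-y_{i}$, so that $\sigma$ acts by $u_{i}\mapsto u_{i}$ and $v_{i}\mapsto -v_{i}$. Then, locally, $\O_{S^{(2)}}=\C[u_{1},u_{2},v_{1}^{2},v_{1}v_{2},v_{2}^{2}]\subset \C[u_{1},u_{2},v_{1},v_{2}]$, the diagonal $\Delta_{S}$ is $\{v_{1}=v_{2}=0\}$ with ideal $\mathcal{I}_{\Delta_{S}}=(v_{1},v_{2})$ in $S\times S$, and the scheme-theoretic image of $\Delta_{S}$ in $S^{(2)}$ is cut out by $(v_{1}^{2},v_{1}v_{2},v_{2}^{2})$, with quotient ring $\C[u_{1},u_{2}]=\O_{\Delta_{S}}$. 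In particular this image is reduced, equals $\Delta$, and $\pi|_{\Delta_{S}}$ is a closed immersion onto it, which completes the first paragraph.

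For the equivariant statement I would start from the canonical $\sigma$-equivariant identifications $\mathcal{I}_{\Delta_{S}}^{k}/\mathcal{I}_{\Delta_{S}}^{k+1}\cong \mathrm{Sym}^{k}\!\big(\mathcal{I}_{\Delta_{S}}/\mathcal{I}_{\Delta_{S}}^{2}\big)\cong\mathrm{Sym}^{k}(\Omega^{1}_{\Delta_{S}})$, the $\sigma$-structure being the one induced by the action on $\O_{S\times S}$. In the coordinates above the conormal bundle $\mathcal{I}_{\Delta_{S}}/\mathcal{I}_{\Delta_{S}}^{2}$ is freely generated by the classes of $v_{1}$ and $v_{2}$, on each of which $\sigma$ acts by $-1$; hence, as a $\Z/2\Z$-equivariant sheaf, it is $\Omega^{1}_{\Delta_{S}}$ with its trivial structure twisted by $\varepsilon$, and therefore $\mathcal{I}_{\Delta_{S}}^{k}/\mathcal{I}_{\Delta_{S}}^{k+1}\cong \mathrm{Sym}^{k}(\Omega^{1}_{S})\otimes\varepsilon^{k}$ equivariantly. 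Restricting to the fibre over a point $p\in S\cong\Delta$ then yields the representation $\varepsilon^{k}\otimes\mathrm{Sym}^{k}(\C^{2})$, with $\C^{2}=\Omega^{1}_{S,p}$ carrying the trivial action, as asserted. I do not expect a genuine obstacle in any of this; the only point requiring care is consistent bookkeeping of the equivariant structures — in particular that $\pi|_{\Delta_{S}}$ intertwines the ambient $\sigma$-action on $\mathcal{I}_{\Delta_{S}}^{k}/\mathcal{I}_{\Delta_{S}}^{k+1}$ with the representation above — and this is precisely what the local computation makes transparent.
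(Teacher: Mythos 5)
Your proposal is correct and follows the same skeleton as the paper: finiteness (hence affineness) of $\pi$ gives the vanishing of the higher direct images, and the identification of $\pi_*(\mathrm{Sym}^k\Omega^1_{\Delta_S})$ comes from the fact that $\pi$ restricts to an isomorphism $\Delta_S\xrightarrow{\sim}\Delta$. The one place where you genuinely diverge is the last assertion: the paper does not prove the equivariant statement at all, but cites the proof of Lemma 2.6 of Scala, whereas you derive it by the explicit (formal/étale-)local computation in the coordinates $u_i=x_i+y_i$, $v_i=x_i-y_i$, observing that $\sigma$ acts by $-1$ on the generators $v_1,v_2$ of the conormal bundle $\mathcal{I}_{\Delta_S}/\mathcal{I}_{\Delta_S}^2$, so that $\mathcal{I}_{\Delta_S}^k/\mathcal{I}_{\Delta_S}^{k+1}\cong\mathrm{Sym}^k(\Omega^1_S)\otimes\varepsilon^k$ equivariantly and the fiber representation is $\varepsilon^k\otimes\mathrm{Sym}^k(\C^2)$. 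This local computation also furnishes a proof of the claim the paper asserts without argument, namely that the (scheme-theoretic) image of $\Delta_S$ in $S^{(2)}$ is the reduced diagonal and $\pi|_{\Delta_S}$ is an isomorphism onto it; so your version is more self-contained, at the modest cost of carrying the coordinate bookkeeping that the citation to Scala avoids.
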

\begin{proof}
The map $\pi$ is finite, hence all higher direct images vanish. The quotient morphism is an isomorphism when restricted to the diagonal and therefore also for all the vector bundles supported on the diagonal. The last assertion is contained in the proof of Lemma 2.6. of \cite{Scala20}.
\end{proof}

\begin{cor}\label{cor:decompositioncohomology}
For every $p\geq 0$ and $k\geq 0$ there are natural identifications
\[
H^p(S\times S, \mathcal{I}_{\Delta_S}^k(L\boxtimes L)) \cong H^p(S^{[2]},L-k\delta)\oplus H^p(S^{[2]},L-(k+1)\delta).
\]
\end{cor}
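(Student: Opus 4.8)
The goal is to prove Corollary \ref{cor:decompositioncohomology}: for every $p \geq 0$ and $k \geq 0$,
\[
H^p(S\times S, \mathcal{I}_{\Delta_S}^k(L\boxtimes L)) \cong H^p(S^{[2]}, L - k\delta) \oplus H^p(S^{[2]}, L - (k+1)\delta).
\]

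My plan: The key is that $\pi: S \times S \to S^{(2)}$ is a finite morphism (degree 2), so $H^p(S\times S, \mathcal{F}) \cong H^p(S^{(2)}, \pi_*\mathcal{F})$ for any coherent sheaf $\mathcal{F}$, since all higher direct images of $\pi$ vanish (as noted in Proposition \ref{prop:symkomega1}). Applying this with $\mathcal{F} = \mathcal{I}_{\Delta_S}^k(L\boxtimes L)$ and using Corollary \ref{cor:pushpowersideal}, I get
\[
H^p(S\times S, \mathcal{I}_{\Delta_S}^k(L\boxtimes L)) \cong H^p\bigl(S^{(2)}, ((\mathcal{I}_\Delta^k\otimes\varepsilon^k)\oplus(\mathcal{I}_\Delta^{k+1}\otimes\varepsilon^{k+1}))\otimes \O_{S^{(2)}}(L^{(2)})\bigr),
\]
which splits as a direct sum of $H^p(S^{(2)}, \mathcal{I}_\Delta^k\otimes\varepsilon^k\otimes\O(L^{(2)}))$ and $H^p(S^{(2)}, \mathcal{I}_\Delta^{k+1}\otimes\varepsilon^{k+1}\otimes\O(L^{(2)}))$.

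Next I would transport each summand up to $S^{[2]}$ via $\mu: S^{[2]} \to S^{(2)}$. By Proposition \ref{prop:pushforwardpowersidealdiagonal}(1), $\mu_*\O_{S^{[2]}}(-k\delta) \cong \mathcal{I}_\Delta^k\otimes\varepsilon^k$, and since $\mu$ is birational with $S^{(2)}$ normal, the higher direct images $R^j\mu_*\O_{S^{[2]}}(-k\delta)$ vanish — this should follow from the fact that $S^{[2]}$ is the blowup of $S^{(2)}$ along the (codimension-2) diagonal with smooth center, or more precisely from Scala's results / a local computation on the blowup, combined with $-k\delta$ being $\mu$-nef (indeed $\mu$-trivial away from being relatively ample in the appropriate sense — actually $-\delta$ restricted to the exceptional divisor is relatively ample, so Grauert–Riemenschneider or a direct $\P^1$-bundle computation applies). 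Granting $R^j\mu_* = 0$ for $j > 0$, the Leray spectral sequence for $\mu$ gives $H^p(S^{[2]}, L - k\delta) \cong H^p(S^{(2)}, \mu_*\O_{S^{[2]}}(\mu^*L^{(2)} - k\delta)) \cong H^p(S^{(2)}, \mathcal{I}_\Delta^k\otimes\varepsilon^k\otimes\O(L^{(2)}))$, using the projection formula with $\mu^*L^{(2)} = L$ and $\mu_*(-k\delta) \cong \mathcal{I}_\Delta^k\otimes\varepsilon^k$. Matching this against the two summands above (with $k$ and $k+1$) yields the claim.

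The main obstacle I anticipate is justifying the vanishing of the higher direct images $R^j\mu_*\O_{S^{[2]}}(-k\delta)$ for $j > 0$ — this is the one input not already spelled out in the excerpt. I would handle it either by citing Scala's paper (the same source used for Proposition \ref{prop:pushforwardpowersidealdiagonal}(1) likely contains or implies this), or by a direct argument: $\mu$ is the blowup of the smooth variety $S^{(2)}$... no, $S^{(2)}$ is only a quotient and singular along $\Delta$; rather $S^{[2]}$ is the blowup of $S\times S$ along $\Delta_S$ modulo $\Z/2\Z$, so one can pull back to $\widetilde{S\times S}$, where $p: \widetilde{S\times S} \to S\times S$ is an honest blowup along a smooth codimension-2 center with exceptional divisor $E$ a $\P^1$-bundle, and use $R^jp_*\O(-kE) = 0$ for $j>0$ (standard), then descend the vanishing through the finite quotient $q$. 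A secondary, more bookkeeping-type concern is making sure the identification of characters $\varepsilon^k$ is consistent on both the $S^{(2)}$ and $S^{[2]}$ sides so that the $k$-summand on the left genuinely matches $L - k\delta$ and not $L - (k+1)\delta$ — but this is exactly the content of Corollary \ref{cor:pushpowersideal}'s last sentence together with Proposition \ref{prop:pushforwardpowersidealdiagonal}(1), so it should go through cleanly.
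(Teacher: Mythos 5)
Your proposal is correct and follows essentially the same route as the paper: degeneration of the Leray spectral sequence for the finite quotient $\pi$, the splitting from Corollary \ref{cor:pushpowersideal}, and the identification $\mu_*\O_{S^{[2]}}(-k\delta)\cong\mathcal{I}_\Delta^k\otimes\varepsilon^k$ from Proposition \ref{prop:pushforwardpowersidealdiagonal}(1). The one point you add, the vanishing of $R^j\mu_*\O_{S^{[2]}}(-k\delta)$ for $j>0$, is indeed needed and is left implicit in the paper (it is contained in Scala's cited result, and your alternative descent from $R^jp_*\O_{\widetilde{S\times S}}(-kE)=0$ through the finite map $q$ also works).
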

\begin{proof}
By Proposition \ref{prop:symkomega1} the Leray spectral sequence degenerates and therefore 
\[
H^p(S\times S, \mathcal{I}_{\Delta_S}^k(L\boxtimes L))\cong H^p(S^{(2)}, \pi_*( \mathcal{I}_{\Delta_S}^k(L\boxtimes L)))
\]
for all $p\geq 0$. Corollary \ref{cor:pushpowersideal} and Item (1) in Proposition \ref{prop:pushforwardpowersidealdiagonal}  yield the result.
\end{proof}

Propositions \ref{prop:pushforwardpowersidealdiagonal} and \ref{prop:symkomega1} yield that the pushforward by  $\pi:S\times S\to S^{(2)}$ of the exact sequence \eqref{eq:gauss} when considering $L = \O_S$ is

\begin{equation}\label{eq:pushexactseq}
\begin{tikzcd}
0\ar[r] & (\mathcal{I}_{\Delta}^{k+1}\otimes\varepsilon^{k+1})\oplus (\mathcal{I}_{\Delta}^{k+2}\otimes\varepsilon^{k+2}) \ar[r] & \,\\
 & (\mathcal{I}_{\Delta}^{k}\otimes\varepsilon^{k})\oplus (\mathcal{I}_{\Delta}^{k+1}\otimes\varepsilon^{k+1})\ar[r] & \text{Sym}^k(\Omega^1_S)\ar[r] & 0.
\end{tikzcd}
\end{equation}

The maps are of $\Z/2\Z$-equivariant sheaves (for the trivial action on $S^{(2)}$). By Proposition \ref{prop:symkomega1} the action of $\Z/2\Z$ on the fibers of $\text{Sym}^k(\Omega^1_S)$ is given by $\varepsilon^{k}\otimes \text{Sym}^k(\C^2)$. Taking the $\Z/2\Z$-invariant part when $k$ is even, or the $\Z/2\Z$-anti-invariant part when $k$ is odd, gives the exact sequence

\begin{equation}\label{eq:invariantsinsym2}
\begin{tikzcd}
0\ar[r] & \mathcal{I}_\Delta^{k+2}\otimes\varepsilon^{k+2} \ar[r]  &\mathcal{I}_\Delta^{k}\otimes\varepsilon^k\ar[r] & \text{Sym}^k\Omega^1_S\ar[r] & 0.
\end{tikzcd}
\end{equation}

On the other hand, the divisor $2\delta$ is effective and represents the exceptional divisor of the blow-up $\mu: S^{[2]}\to S^{(2)}$, which is then identified with $\P(\Omega^1_S)$. The divisor exact sequence for $\P(\Omega^1_S)$ is then:
\begin{equation}\label{eq:divisorinhilb2}
\begin{tikzcd}
0\ar[r] & \O_{S^{[2]}}(-2\delta)\ar[r]  & \O_{S^{[2]}}\ar[r] & \O_{\P(\Omega^1_S)}\ar[r] & 0.
\end{tikzcd}
\end{equation}

The restriction of $\mu$ to $\P(\Omega^1_S)$ is a $\P^1$-bundle over $S$. Moreover $-\delta|_{\P(\Omega^1_S)} = \xi$ is the tautological class on $\P(\Omega^1_S)$ and for every $k\geq 0$ there is an isomorphism
$\mu_*(\O_{\P(\Omega^1_S)}(k\xi))\cong \text{Sym}^k\Omega^1_S$
(cf. \cite[Appendix A]{PAG1}).
By Proposition \ref{prop:pushforwardpowersidealdiagonal}, if we tensor the exact sequence \eqref{eq:divisorinhilb2} with $\O_{S^{[2]}}(-k\delta)$ and push it forward by $\mu$, we obtain:

\begin{equation}\label{eq:divisorinsym2}
\begin{tikzcd}
0\ar[r] & \mathcal{I}_\Delta^{k+2}\otimes\varepsilon^{k+2} \ar[r]  &\mathcal{I}_\Delta^{k}\otimes\varepsilon^k\ar[r] & \text{Sym}^k\Omega^1_S\ar[r] & 0.
\end{tikzcd}
\end{equation}

The diagram \eqref{eq:commutativediagram} is commutative, hence we get the following.

\begin{prop}\label{prop:identification}
The induced exact sequence \eqref{eq:invariantsinsym2} given by taking invariants and the induced exact sequence \eqref{eq:divisorinsym2} given by the pushforward of $\mu$ are the same.
\end{prop}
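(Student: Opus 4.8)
The plan is to show that both exact sequences arise as pushforwards of the same underlying short exact sequence on the blowup, exploiting the commutativity of diagram \eqref{eq:commutativediagram}. First I would recall that the universal family $\widetilde{S\times S}$ sits over both $S\times S$ (via the blowup $p$, with exceptional divisor $E$) and over $S^{[2]}$ (via $q$, the double cover branched along $2\delta$, with $kE = q^*(-k\delta)$). The starting point is the ideal-sheaf filtration on $S\times S$, i.e. sequence \eqref{eq:gauss} twisted down appropriately, whose pushforward by $\pi$ gives \eqref{eq:pushexactseq}; on the other side, sequence \eqref{eq:divisorinhilb2} twisted by $\O_{S^{[2]}}(-k\delta)$ pushes forward by $\mu$ to give \eqref{eq:divisorinsym2}. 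Both of these reduce, after extracting the $\varepsilon^k$-isotypic piece, to a sequence of the shape $0\to \mathcal{I}_\Delta^{k+2}\otimes\varepsilon^{k+2}\to \mathcal{I}_\Delta^{k}\otimes\varepsilon^k\to \text{Sym}^k\Omega^1_S\to 0$ on $S^{(2)}$, so it suffices to identify the connecting maps.

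The key step is to lift both sequences to $\widetilde{S\times S}$ and check they coincide there. On the blowup, pulling back \eqref{eq:gauss} by $p$ and the divisor sequence \eqref{eq:divisorinhilb2} by $q$ both produce the divisor exact sequence associated with the exceptional divisor $E$ (note $p^*\mathcal{I}_{\Delta_S}^k\to \O(-kE)$ is the natural surjection, so the $p$-pullback of \eqref{eq:gauss} twisted is $0\to \O(-(k+2)E)\to \O(-kE)\to \O_{(k+2)E}/\O_{kE}\text{-type quotient}\to 0$, whereas $q^*$ of \eqref{eq:divisorinhilb2} twisted by $-k\delta$ is $0\to\O(-(k+2)E)\to\O(-kE)\to \O_{2E}(-kE)\to 0$ using $q^*(-2\delta)=-2E$). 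Here one must be a little careful: the quotient in the $p$-pulled-back version is $\text{Sym}^k\Omega^1$ of the \emph{diagonal in $S\times S$} embedded via $E$, while in the $\mu$/$q$ version it is $\text{Sym}^k\Omega^1$ of $S$ viewed inside $\P(\Omega^1_S)=2\delta$; Proposition \ref{prop:symkomega1} identifies these two, and the tautological class computation $-\delta|_{\P(\Omega^1_S)}=\xi$ together with $\mu_*\O_{\P(\Omega^1_S)}(k\xi)\cong\text{Sym}^k\Omega^1_S$ makes the match of quotients canonical.

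With the two sequences identified on $\widetilde{S\times S}$, I would push down along $p$ (resp. $q$) and then along $\pi$ (resp. $\mu$) and invoke $\pi\circ p=\mu\circ q$ from \eqref{eq:commutativediagram}; since all higher direct images of $p$ and $\mu$ along the relevant sheaves vanish (blowup of a smooth center, twisted by effective exceptional divisors — this is the $\mu_*\O_{\P(\Omega^1_S)}(k\xi)\cong\text{Sym}^k\Omega^1_S$ computation, plus Proposition \ref{prop:pushforwardpowersidealdiagonal} for the $\mathcal{I}_\Delta^k$ terms), the pushforwards of the two coinciding sequences coincide, and extracting the $\varepsilon^k$-isotypic component is a functorial operation that commutes with everything in sight. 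This yields the equality of \eqref{eq:invariantsinsym2} and \eqref{eq:divisorinsym2}.

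The main obstacle I anticipate is the careful bookkeeping of the $\Z/2\Z$-equivariant structures and the twist by the alternating character $\varepsilon$: one must verify that the natural maps respect the equivariant structure and that taking $\varepsilon^k$-isotypic parts on $S^{(2)}$ genuinely lands on the stated sequence with the correct signs — in particular that the quotient $\text{Sym}^k\Omega^1_S$ carries the representation $\varepsilon^k\otimes\text{Sym}^k(\C^2)$ as in Proposition \ref{prop:symkomega1}, so that extracting invariants (for $k$ even) or anti-invariants (for $k$ odd) gives $\text{Sym}^k\Omega^1_S$ with no extra twist. Once the equivariant structures are pinned down, the rest is formal from the commutative diagram and the vanishing of higher direct images.
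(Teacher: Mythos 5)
Your argument is essentially the paper's own: the paper offers no proof beyond the remark that diagram \eqref{eq:commutativediagram} commutes, and your write-up simply fills in that intended argument — both sequences are the pushforward of the single twisted divisor sequence $0\to\O(-(k+2)E)\to\O(-kE)\to\O_{2E}(-kE)\to 0$ on $\widetilde{S\times S}$ along $\pi\circ p=\mu\circ q$, identified after projecting to the $\varepsilon^k$-isotypic part, with the needed vanishing of higher direct images as in Proposition \ref{prop:pushforwardpowersidealdiagonal} and \cite[Lemma 4.3.16]{PAG1}. The only blemish is the slightly garbled description of the quotient of the $p$-side sequence (it is $\mathcal{I}_{\Delta_S}^{k}/\mathcal{I}_{\Delta_S}^{k+2}$ supported on $\Delta_S$, whose extra graded piece lands in the other isotypic component), but this does not affect the correctness of the approach.
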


All the previous computations give us a sufficient condition for the surjectivity of higher Gaussian maps using the cohomology of line bundles on the Hilbert scheme.

\begin{thm}\label{thm:vanish}
Let $L$ be a line bundle on a regular surface $S$. If the cohomology group $H^1(S^{[2]}, L-(k+2)\delta)$ vanishes, then the $k$-th Gaussian map $\gamma^k_L$ is surjective. 
\end{thm}
\begin{proof}
The $k$-th Gaussian map for the line bundle $L$ is by definition the morphism on global sections
\begin{equation}\label{eq:gaussiansurface}
\gamma^k_L: H^0(S\times S,  \mathcal{I}_{\Delta_S}^{k} (L\boxtimes L))\to H^0(S,\text{Sym}^k(\Omega^1_S)(2L))
\end{equation}
induced by the exact sequence \eqref{eq:gauss}. By definition the global sections of $\mathcal{I}_{\Delta_S}^{k} (L\boxtimes L))$ and $\pi_*(\mathcal{I}_{\Delta_S}^{k} (L\boxtimes L)))$ are the same. Hence, by pushing forward \eqref{eq:gauss} via $\pi$, yields

\begin{equation}
\begin{tikzcd}
0\ar[r] & (\mathcal{I}_{\Delta}^{k+1}\otimes\varepsilon^{k+1})(L^{(2)})\oplus (\mathcal{I}_{\Delta}^{k+2}\otimes\varepsilon^{k+2})(L^{(2)}) \ar[r] & \,\\
 & (\mathcal{I}_{\Delta}^{k}\otimes\varepsilon^{k})(L^{(2)})\oplus (\mathcal{I}_{\Delta}^{k+1}\otimes\varepsilon^{k+1})(L^{(2)})\ar[r] & \text{Sym}^k(\Omega^1_S(2L))\ar[r] & 0.
\end{tikzcd}
\end{equation}
By Proposition \ref{prop:symkomega1}, the action of $\Z/2\Z$ on the fibers of $\mathrm{Sym}^k(\Omega^1_S)$ is via the representation $\varepsilon^k\otimes\mathrm{Sym}^k(\C^2)$. Therefore necessarily the linearization on $\mathrm{Sym}^k(\Omega^1_S)$ is different from the one of $(\mathcal{I}_{\Delta}^{k+1}\otimes\varepsilon^{k+1})$, hence by taking sections of the exact sequence above gives that  $H^0(S^{[2]},L-(k+1)\delta)\subseteq \ker(\gamma^k_L)$ for each $k\geq 0$. This implies that the morphism \eqref{eq:gaussiansurface} is surjective if and only if the restriction
\begin{equation}
\gamma^k_L: H^0(S^{(2)}, \pi_*(\mathcal{I}_{\Delta}^{k}\otimes\varepsilon^{k})(L^{(2)}))\to H^0(S,\text{Sym}^k(\Omega^1_S)(2L))
\end{equation}
is surjective. The identification given in Proposition \ref{prop:identification} gives that the restriction above is the same as the map in cohomology
\[
\begin{tikzcd}
H^0(S^{(2)},\mathcal{I}_\Delta^{k}\otimes\varepsilon^{k}(L^{(2)}))\ar[r,"\gamma^k_L"]& H^0(\Omega^1_S,\text{Sym}^k\Omega^1_S(2L))\ar[r] & H^1(S^{(2)},\mathcal{I}_\Delta^{k+2}\otimes\varepsilon^{k+2}(L^{(2)})).
\end{tikzcd}    
\]

Since $S^{(2)}$ has rational singularities we have that $\mu_*\O_{S^{[2]}} = \O_{S^{(2)}}$. By the projection formula we get
\[
H^1(S^{(2)},\mathcal{I}_\Delta^{k+2}\otimes\varepsilon^{k+2}(L^{(2)}))\cong H^1(S^{[2]}, L-(k+2)\delta).
\]
Hence if the last group vanishes the morphism $\gamma^k_L$ is surjective as we wanted to prove.
\end{proof}

A good knowledge of the nef cone of the surface $S$ will let us show surjectivity of a higher Gaussian map by applying standard vanishing theorems for big and nef line bundles. Let us mention an immediate corollary of Theorem \ref{thm:vanish} concerning Gaussian maps for the hyperplane line bundle on $\P^2$.

\begin{cor}
Let $H$ be the hyperplane line bundle on $\P^2$. If $a\geq k-1$, then $\gamma^k_{aH}$ is surjective.
\end{cor}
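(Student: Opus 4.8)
The plan is to invoke Theorem \ref{thm:vanish}: it suffices to show that $H^1\bigl((\P^2)^{[2]},\,aH-(k+2)\delta\bigr)=0$ whenever $a\geq k-1$, where $H$ now stands for the class on $(\P^2)^{[2]}$ corresponding to $\O_{\P^2}(1)$ under the splitting \eqref{eq:pics2}. The first step is to recall the classical geometry of $(\P^2)^{[2]}$. Sending a length-$2$ subscheme $Z$ to the line $\langle Z\rangle\subset\P^2$ it spans defines a morphism $\pi\colon (\P^2)^{[2]}\to(\P^2)^{\vee}$ which realises $(\P^2)^{[2]}$ as a $\P^2$-bundle over the dual plane, the fibre over $\ell$ being $\ell^{[2]}\cong\P^2$. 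Set $h:=\pi^*\O_{(\P^2)^{\vee}}(1)$. Testing on a curve contracted by the morphism $\mu$ of \eqref{eq:commutativediagram} and on a curve dominating $(\P^2)^{\vee}$ one finds $h=H-\delta$; since $H=\mu^*(\text{ample})$ and $h=\pi^*(\text{ample})$ are the classes contracted by the two extremal contractions of $(\P^2)^{[2]}$, the nef cone is spanned by $H$ and $h$, the class $H$ is moreover big, and $K_{(\P^2)^{[2]}}=-3H$.

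In these coordinates $aH-(k+2)\delta=(a-k-2)H+(k+2)h$, that is,
\[
aH-(k+2)\delta=K_{(\P^2)^{[2]}}+\bigl((a-k+1)H+(k+2)h\bigr).
\]
If $a\geq k$ the divisor $(a-k+1)H+(k+2)h$ is a nonnegative combination of nef classes, hence nef, and it is big because its $H$-coefficient is positive and $H$ is big; Kawamata--Viehweg vanishing then yields $H^1\bigl((\P^2)^{[2]},aH-(k+2)\delta\bigr)=0$.

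It remains to handle the boundary value $a=k-1$, where the formula above reads $aH-(k+2)\delta=K_{(\P^2)^{[2]}}+(k+2)h=K_{(\P^2)^{[2]}}+\pi^*\O_{(\P^2)^{\vee}}(k+2)$ and $(k+2)h$ is nef but not big, so Kawamata--Viehweg no longer applies. Here I would push forward along the $\P^2$-bundle $\pi$. The relative dualizing sheaf $\omega_{(\P^2)^{[2]}/(\P^2)^{\vee}}$ restricts to $\O_{\P^2}(-3)$ on every fibre, whence $R^0\pi_*\omega_{(\P^2)^{[2]}}=R^1\pi_*\omega_{(\P^2)^{[2]}}=0$ and $R^2\pi_*\omega_{(\P^2)^{[2]}}=\omega_{(\P^2)^{\vee}}$; the Leray spectral sequence for $\pi$ then degenerates and gives $H^1\bigl((\P^2)^{[2]},K_{(\P^2)^{[2]}}+\pi^*\O(k+2)\bigr)\cong H^{-1}\bigl((\P^2)^{\vee},\O_{(\P^2)^{\vee}}(k-1)\bigr)=0$. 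Together with the previous case this covers all $a\geq k-1$.

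The step demanding the most attention is the geometric input assembled in the first paragraph — the $\P^2$-bundle structure of $(\P^2)^{[2]}$ over the dual plane, the identity $h=H-\delta$, the shape of the nef cone, and $K_{(\P^2)^{[2]}}=-3H$ — all of which are classical but must be pinned down carefully; and it is exactly the loss of bigness of $(a-k+1)H+(k+2)h$ at $a=k-1$ that forces the separate fibrewise argument in the last step.
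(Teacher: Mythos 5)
Your proposal is correct, and its backbone is the same as the paper's: reduce via Theorem \ref{thm:vanish} to the vanishing of $H^1\bigl((\P^2)^{[2]},aH-(k+2)\delta\bigr)$, use that $\Nef((\P^2)^{[2]})$ is spanned by $H$ and $H-\delta$, and apply Kawamata--Viehweg after writing the divisor as $K_{(\P^2)^{[2]}}+\bigl((a+3)H-(k+2)\delta\bigr)$. Two differences are worth recording. First, you re-derive the nef cone and the identity $H-\delta=\pi^*\O_{(\P^2)^\vee}(1)$ from the two extremal contractions (Hilbert--Chow and the span map to the dual plane), whereas the paper simply cites \cite[Theorem 4.1]{LQZ03}; both are fine. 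Second, and more substantively, you treat the boundary value $a=k-1$ separately: there the twist equals $(k+2)(H-\delta)$, which is nef but not big, so Kawamata--Viehweg does not literally apply, and your computation of $R^i\pi_*\omega_{(\P^2)^{[2]}}$ along the $\P^2$-fibration (only $R^2$ survives, so the $q=0,1$ rows of Leray vanish and $H^1=0$) is exactly what is needed to reach the stated bound $a\geq k-1$. The paper's one-line proof asserts that $(3+a)H-(k+2)\delta$ is big and nef ``if and only if $a\geq k+1$'', which is off (in the basis $H$, $H-\delta$ the correct thresholds are $a\geq k-1$ for nef and $a\geq k$ for big and nef) and in any case does not cover the boundary case claimed in the statement; so your extra fibrewise step makes the argument complete where the paper's is not. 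The only cosmetic point is the informal ``$H^{-1}$'' phrasing: what you are really using is that the $R^0$ and $R^1$ rows vanish, so no degeneration statement is needed.
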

\begin{proof}
In \cite[Theorem 4.1]{LQZ03} it is shown that the nef cone of $\P^{2[2]}$ is generated by the classes $H$ and $H-\delta$, hence the class $(3+a)H-(k+2)\delta$ is big and nef if and only if $a\geq k+1$. An application of Kawamata-Viehweg vanishing and Theorem \ref{thm:vanish} yields the result.
\end{proof}

\section{An ampleness criterion}\label{sec:amplenesscriterion}

Let $(S,L)$ be an arbitrary polarized K3 surface. Recall that $L$ is called $k$-very ample, for an integer $k\geq 0$, if for any $0$-dimensional subscheme $Z$ of length $k+1$ the restriction map $H^0(S,L)\to H^0(S,L\otimes\O_Z)$ is surjective. There is a generalization of a celebrated result of Saint-Donat \cite{SD1974} due to Knutsen that characterizes $k$-very ampleness \emph{numerically}.

\begin{thm}\cite[Theorem 1.1]{Knutsen01}\label{thm:knutsen}
Let $L$ be a big and nef line bundle on a K3 surface and $k\geq 0$ an integer. The following conditions are equivalent:
\begin{enumerate}
    \item $L$ is $k$-very ample.
    \item $L^2\geq 4k$ and there exists no effective divisor $D$ satisfying the conditions:
    \begin{equation}\label{eq:knutsen}
    2D^2\leq L\cdot D\leq D^2 + k + 1 \leq 2k + 2.    
    \end{equation}
\end{enumerate}
\end{thm}

Let $S$ be a K3 surface. Via the identification \eqref{eq:pics2}, all divisors in $\Pic(S^{[2]})$ are of the form $D+a\delta$, where $D\in \Pic(S)$ and $a\in \Z$. We will now show an immediate corollary of Theorem \ref{thm:knutsen}.

\begin{cor}\label{cor:veryample}
Let $L$ be an ample divisor on $S$, then for $m \gg 0$ the divisor $mL-\delta$ is very ample on $S^{[2]}$.
\end{cor}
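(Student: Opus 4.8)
The statement to prove is Corollary \ref{cor:veryample}: if $L$ is ample on a K3 surface $S$, then $mL - \delta$ is very ample on $S^{[2]}$ for $m \gg 0$. The natural strategy is to connect very ampleness on $S^{[2]}$ with $k$-very ampleness on $S$, and then invoke Theorem \ref{thm:knutsen}. Indeed, a line bundle of the form $\mathcal{N} - \delta$ on $S^{[2]}$ (with $\mathcal{N}$ induced from $S$) is very ample precisely when $\mathcal{N}$ separates length-$2$ subschemes of $S$ in the appropriate scheme-theoretic sense — this is exactly what $2$-very ampleness buys, and more generally there is a standard principle (going back to Catanese–Göttsche and Beltrametti–Sommese) that $k$-very ampleness of $\mathcal{N}$ on $S$ implies very ampleness of the induced class $\mathcal{N} - \delta$ on $S^{[2]}$ (indeed on $S^{[k]}$, but we only need $k=2$ here, or perhaps a slightly larger $k$ to be safe). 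So the first step is to reduce the claim to: \emph{for $m \gg 0$, $mL$ is $2$-very ample on $S$} (or $k$-very ample for some fixed small $k$ that the $S^{[2]}$ statement requires).

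**Key steps.** First I would state and use the reduction: cite the fact that if $\mathcal{N}$ is $k$-very ample on $S$ then $\mathcal{N} - \delta$ (under the identification $\Pic(S^{[2]}) = \Pic(S) \oplus \Z\delta$) is very ample on $S^{[2]}$, for $k = 2$. Second, I would verify that $mL$ is $2$-very ample for $m$ large using Theorem \ref{thm:knutsen}: the condition $L^2 \geq 4k = 8$ is satisfied for $(mL)^2 = m^2 L^2$ once $m$ is large; and the non-existence of an effective divisor $D$ with
\begin{equation*}
2D^2 \leq (mL)\cdot D \leq D^2 + 3 \leq 6
\end{equation*}
must be checked. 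The point is that $(mL)\cdot D = m(L\cdot D)$ and $L\cdot D > 0$ for every effective $D$ (since $L$ is ample), so $(mL)\cdot D \to \infty$ with $m$ unless $L \cdot D = 0$, which is impossible for effective nonzero $D$; meanwhile the chain forces $(mL)\cdot D \leq 6$. Hence for $m > 6$ no such $D$ exists, and $mL$ is $2$-very ample. Combining the two steps gives the corollary.

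**Main obstacle.** The genuinely delicate point is the reduction step — pinning down the precise implication "$k$-very ample on $S$ $\Rightarrow$ $\mathcal{N}-\delta$ very ample on $S^{[2]}$" and citing it correctly. One must be careful that the class $\mathcal{N} - \delta$ (with a single, not double, $\delta$) is the right one: recall $2\delta$ is the exceptional divisor, so $\mathcal{N} - \delta$ is the "half" class, and on $S^{[2]}$ the tautological-type very ample bundles associated to a $k$-very ample $\mathcal{N}$ are exactly of this shape. If one is worried about the exact value of $k$ needed, it is harmless to take $k$ a bit larger (say $k=2$ suffices for $S^{[2]}$, but even $k = 3$ would do) since the argument above produces $mL$ which is $k$-very ample for \emph{every} fixed $k$ once $m > 2k+2$; the ampleness of $L$ is doing all the work, guaranteeing $L \cdot D \geq 1$ for effective $D$ and hence $(mL)\cdot D$ unbounded. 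So the only real content is invoking the correct Hilbert-scheme very-ampleness criterion; everything numerical is immediate from Theorem \ref{thm:knutsen}.
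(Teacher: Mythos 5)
Your proposal is correct and follows essentially the same route as the paper: the reduction to $2$-very ampleness of $mL$ via the Catanese--G\"ottsche criterion for $\mathcal{N}-\delta$ on $S^{[2]}$, followed by the numerical check with Theorem \ref{thm:knutsen}, using that $L\cdot D\geq 1$ for effective $D$ forces $m(L\cdot D)>6$ once $m\geq 7$. The only cosmetic difference is that the paper invokes the Catanese--G\"ottsche statement as an equivalence while you only need (and use) the implication from $2$-very ampleness to very ampleness, which is all the corollary requires.
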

\begin{proof}
The main theorem of \cite{CatGoe90} says that the divisor $mL-\delta$ is very ample if and only if $mL$ is $2$-very ample. Using the characterization of $2$-very ampleness in Theorem \ref{thm:knutsen}, this occurs if and only if $(mL)^2\geq 8$ and there is no effective divisor $D$ such that
\begin{equation}\label{eq:2veryamplenness}
2D^2\leq mL\cdot D\leq D^2 + 3\leq 6.    
\end{equation}
Since $D$ is effective and $L$ ample, we always have $L\cdot D \geq 1$. On the other hand, equation \eqref{eq:2veryamplenness} implies that $mL\cdot D\leq 6$. So, whenever $m\geq 7$ the equation \eqref{eq:2veryamplenness} cannot hold and surely $(mL)^2\geq 8$. The thesis follows.
\end{proof}
\begin{rem}\label{rem:nefconeisfulldimensional}
As a consequence of Corollary \ref{cor:veryample}, we have that
$\Nef(S^{[2]})\cap \mathrm{span}_\R(L,\delta)$ is a two dimensional cone. Indeed, it is a cone since $\Nef(S^{[2]})$ is a cone and $\text{span}_\R(L,\delta)$ is a linear subspace; the class $mL-\delta$ for $m\gg 0$ is in $\Nef(S^{[2]})$ by Corollary \ref{cor:veryample}. Finally notice that $L$ is big and nef, because is the pullback of the ample divisor $L^{(2)}$ under the blowup map, hence $L$ is on the boundary of $\Nef(S^{[2]})$.
\end{rem}

We will use the explicit characterization of the Nef cone for moduli spaces of stable objects on K3 surfaces given in \cite{BMMMP14}. For this we need to introduce some notation. For any K3 surface $S$, the Hilbert scheme $S^{[2]}$ is an example of a \emph{hyperk\"ahler variety} \cite{Beau83}. For every hyperk\"ahler manifold $X$ the second integral cohomology $H^2(X,\Z)$ is endowed with a quadratic form, called the Beauville-Bogomolov form, which is denoted by $q_X$ (see \cite{Beau83} for details). For $S^{[2]}$, there is an isomorphism of lattices preserving the Hodge structure
\[
H^2(S^{[2]},\Z) \cong H^2(S,\Z) \oplus \Z\cdot\delta 
\]
where $q_{S^{[2]}}(\delta) = -2$ and the lattice structure on $H^2(S,\Z)$ is given by the intersection pairing. The divisibility of an element $\kappa$ in $H^2(S^{[2]},\Z)$, denoted by $\mathrm{div}(\kappa)$, is defined as the positive generator of the ideal $q(\kappa,H^2(S^{[2]},\Z))$ in $\Z$.

\begin{defn}
Let $S$ be a projective K3 surface. A class $\kappa\in\Pic(S^{[2]})$ is called a $(-2)$-class if $q(\kappa) = -2$; it is called a $(-10)$-class if $q(\kappa) = -10$ and also $\mathrm{div}(\kappa) = 2$.
\end{defn}

The \emph{positive cone} $\mathrm{Pos}(X)$ of a hyperk\"ahler manifold $X$ is by definition the connected component of $\{\alpha \in H^{1,1}(X,\R) : q(\alpha) >0\}$ containing an ample class. The stable base locus of a line bundle $L$ on $X$ is the intersection of the base loci $|kL|$ for all positive integers $k$. We say that $L$ is movable if its stable base locus has codimension at least $2$ on $X$. We denote by $\mathrm{Mov}(X)$ the cone spanned by movable classes.

\begin{thm}[\cite{BMMMP14}]\label{thm:bayermacri}
Let $S$ be a projective $K3$ surface. Then:
\begin{enumerate}
    \item The interior of the movable cone of $S^{[2]}$ is the connected component of
    \[
    \mathrm{Pos}(S^{[2]}) \setminus \bigcup_{\kappa\in \Pic(S^{[2]})\,:\, q(\kappa) = -2} \kappa^\bot
    \]
    that contains the class of an ample divisor.
    \item The ample cone is the connected component of
    \[
    \mathrm{Mov}(S^{[2]}) \setminus \bigcup_{\kappa\in \Pic(S^{[2]}) \,:\, q(\kappa) = -10,\, \mathrm{div}(\kappa) = 2} \kappa^\bot
    \]
    that contains the class of an ample divisor.
\end{enumerate}
\end{thm}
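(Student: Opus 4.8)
The plan is to obtain this statement as the specialization to $S^{[2]}$ of the general description of the movable and nef cones of moduli spaces of Bridgeland-stable objects on a projective K3 surface established in \cite{BMMMP14}; there is no independent argument to give, so here ``proof'' means carrying out that translation carefully. First I would record that $S^{[2]}$ is the moduli space of Gieseker-stable sheaves with the primitive Mukai vector $v=(1,0,-1)$ (namely the ideal sheaves $\I_Z$ of length-two subschemes $Z\subset S$), that $\langle v,v\rangle = 2$, and that for a generic polarization this Gieseker space coincides with a Bridgeland moduli space $M_\sigma(v)$ for $\sigma$ in the corresponding chamber. The Mukai homomorphism $\theta_v$ then furnishes an isometry between $v^{\perp}$ in the algebraic Mukai lattice of $S$ and $H^2(S^{[2]},\Z)$, carrying the Mukai pairing to the Beauville--Bogomolov form $q_{S^{[2]}}$; this is the dictionary through which every numerical condition below is read.

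Next I would invoke the two structural results of \cite{BMMMP14}. For part (1): the interior of the movable cone of $M_\sigma(v)$, viewed inside the positive cone, is the chamber cut out by the ``totally semistable'' walls, and these are the hyperplanes $\theta_v(a)^{\perp}$ with $a$ a spherical class ($\langle a,a\rangle=-2$) subject to a numerical bound relating $\langle a,v\rangle$ to $\langle v,v\rangle$; under $\theta_v$ the condition $\langle a,a\rangle=-2$ becomes $q_{S^{[2]}}(\kappa)=-2$, which is the asserted description of $\mathrm{Mov}(S^{[2]})$. For part (2): passing from the movable cone to the nef cone one further removes the walls across which the induced birational map is a genuine flop or a divisorial contraction rather than an isomorphism; Bayer and Macrì classify these through the rank-two hyperbolic sublattice spanned by $v$ together with an effective spherical or isotropic class producing the contraction, and in the case $n=2$ the resulting wall divisors $\kappa\in\Pic(S^{[2]})$ turn out to be exactly the primitive classes with $q_{S^{[2]}}(\kappa)=-10$ and $\mathrm{div}(\kappa)=2$. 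In both parts one then selects the chamber containing an ample class, which is unambiguous since an ample class lies on none of the removed hyperplanes.

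The main obstacle is the lattice bookkeeping, and in particular telling the \emph{honest} flopping or divisorial walls predicted by the abstract machinery apart from the \emph{fake} walls, where a potential wall exists but the moduli spaces on the two sides are isomorphic and the cone is unaffected. The content of the Bayer--Macrì analysis, specialized here, is precisely that for $S^{[2]}$ the surviving walls are cut out by the numerical conditions $q=-2$ (for the movable cone) and $(q,\mathrm{div})=(-10,2)$ (for the nef cone); once that dictionary and classification are in place the two statements follow at once.
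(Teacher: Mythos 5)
The paper offers no proof of this statement: it is quoted verbatim from \cite{BMMMP14}, so there is nothing internal to compare with, and your proposal --- realizing $S^{[2]}$ as the moduli space of ideal sheaves with Mukai vector $v=(1,0,-1)$, $\langle v,v\rangle =2$, and transporting Bayer--Macrì's wall-and-chamber description of the movable and nef cones through the Mukai isometry $\theta_v\colon v^{\perp}\to H^2(S^{[2]},\Z)$ --- is exactly the intended route and is correct in substance. One imprecision worth fixing in your part (1): in Bayer--Macrì's theorem the walls of the movable cone are not only $\theta_v(a)^{\perp}$ for spherical classes $a\in v^{\perp}$, but also the Hilbert--Chow and Li--Gieseker--Uhlenbeck walls attached to isotropic classes $w$ with $\langle v,w\rangle=1,2$; in the case $\langle v,v\rangle=2$ these are subsumed in the $(-2)$-wall description, because the rank-two lattice spanned by $v$ and $w$ meets $v^{\perp}$ in the class $v-2w$ (resp.\ $v-w$), which is spherical and cuts the same hyperplane --- concretely, the Hilbert--Chow wall is $\delta^{\perp}$ with $q(\delta)=-2$ --- so the statement of part (1) does come out as claimed. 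Your part (2) is consistent with the same bookkeeping: a spherical class $a$ with $\langle a,v\rangle=1$ produces the wall divisor $\kappa=\theta_v(2a-v)$, and one checks $q(\kappa)=-10$ and $\mathrm{div}(\kappa)=2$, which is precisely the list appearing in the theorem.
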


\begin{cor}\label{cor:kappaclasses}
Let $(S,L)$ be a polarized $K3$ surface and $a>0$ an integer. Assume $q(L-a\delta)\geq 0$, then the class $L-a\delta$ in $S^{[2]}$ is:
\begin{enumerate}
    \item In the interior of the movable cone if and only if there exists no class $\kappa = D -b\delta$ satisfying:
    \begin{enumerate}
        \item  $q(\kappa) =-2$ and $b>0$.
        \item $D\in \Pic(S)$ is a non-zero effective divisor.
        \item $q(L-a\delta,\kappa)\leq 0$. 
    \end{enumerate}
    \item Ample if and only if it is movable and there exists no class $\kappa = 2D -b\delta$ satisfying:
    \begin{enumerate}
        \item  $q(\kappa) =-10$ and $b>0$.
        \item $D\in \Pic(S)$ is a non-zero effective divisor.
        \item $q(L-a\delta,\kappa)\leq 0$.
    \end{enumerate}
\end{enumerate}
\end{cor}
\begin{rem}
If $a<0$, the class $L-a\delta$ cannot be ample, since it will have negative intersection with any curve in the exceptional divisor.
\end{rem}
\begin{proof}[Proof of Corollary \ref{cor:kappaclasses}]
We just need to prove the implication from right to left, i.e. that $L-a\delta$ is in the interior of the movable cone, for this we will use the characterization in Theorem \ref{thm:bayermacri}. Let $\kappa = D - b\delta$ be a $(-2)$-class in $\Pic(S^{[2]})$, with $b\in\Z$. We can assume $b\geq 0$ since $\kappa^\perp = (-\kappa)^\perp$. If we prove that $D$ is effective then by hypothesis $q(L-a\delta,\kappa)>0$ and therefore $L-a\delta$ will be in the interior of the movable cone. Since $D^2 = 2b^2-2\geq -2$, by Riemann-Roch either $D$ or $-D$ is effective. On the other hand $L$, as divisor in $S^{[2]}$, is big and nef (Remark \ref{rem:nefconeisfulldimensional}) and therefore by Theorem \ref{thm:bayermacri} we must have $0\leq q(L,\kappa) = L\cdot D$. Hence $D$ must be effective and (1) follows. 

For the proof of (2), let $\kappa$ be a $(-10)$-class in $\Pic(S^{[2]})$. Since the lattice $H^2(S,\Z)$ is unimodular, $\kappa$ has divisibility $2$ if and only if it is of the form $\kappa = 2D-b\delta$; we can further assume $b\geq 0$ as before. Moreover, if $b=0$, then $\kappa = 2D$ and therefore $q(\kappa) = 4D^2$ cannot be $-10$, hence we have $b>0$. Using Theorem \ref{thm:bayermacri} again, we are left to check that  $D$ must be effective. Using $q(\kappa) = -10$ we get that
\[
D^2 = \frac{b^2-1}{2}-2\geq -2
\]
with $b\geq 1$, the same arguments as in the first case yield the result.
\end{proof}

In general there exist an infinite number of $(-2)$-classes and $(-10)$-classes on a Hyperk\"ahler manifold of  $K3^{[2]}$-type. Corollary \ref{cor:kappaclasses} will allow to show that with respect to a fixed divisor in $S^{[2]}$ they can be "bounded", the precise meaning is explained in the following.

\begin{thm}\label{thm:alglineal-2}
Let $\kappa = D-b\delta$ be a $(-2)$ class as in Corollary \ref{cor:kappaclasses}. If $q(L-a\delta)>0$ and $q(L-a\delta, \kappa)\leq0$, then 
\begin{equation}\label{eq:-2classes}
    0 < b^2\leq \frac{d}{d-a^2} \,\,\,\,\,\,\text{and}\,\,\,\,\,\, 0 < L\cdot D\leq 2ab,  
\end{equation}
where $L^2 = 2d$.
\end{thm}
\begin{proof}
Define the integers $x:= D^2$ and $y:= L\cdot D$. In our notation we have $q(L-a\delta)>0$ if and only if $d-a^2>0$. The sublattice spanned by $L$ and $D$ in $\Pic(S)$ \emph{cannot} be positive definite by the Hodge Index Theorem. Hence
\begin{equation}\label{eq:hodgeindex}
    2dx-y^2\leq 0.
\end{equation}
Since $q(\kappa) = -2$, then $x = 2b^2-2$. The fact that $\kappa$ intersects non-positively $L-a\delta$ yields the inequality
\begin{equation}\label{eq:intersecneg}
    y-2ab\leq 0.
\end{equation}
Notice that $y>0$, because $L$ is ample by hypothesis and $D$ is effective by Corollary \ref{cor:kappaclasses}, this gives the second inequality in \eqref{eq:-2classes}. Squaring \eqref{eq:intersecneg} gives $y^2\leq 4a^2b^2$ and substituting $x$ in \eqref{eq:hodgeindex} implies the inequality
\[
4d(b^2-1)\leq y^2\leq 4a^2b^2.
\]
An algebraic manipulation yields the result.
\end{proof}

There is an analogous result for $(-10)$-classes, we omit the proof since it is very similar to that of Theorem \ref{thm:alglineal-2}.

\begin{thm}\label{thm:alglineal-10}
Let $\kappa = 2D-b\delta$ be a $(-10)$ class as in Corollary \ref{cor:kappaclasses}. If $q(L-a\delta)>0$ and $q(L-a\delta, \kappa)\leq0$, then 
\begin{equation}\label{eq:-10classes}
    0 < b^2\leq \frac{5d}{d-a^2} \,\,\,\,\,\,\text{and}\,\,\,\,\,\, 0 < L\cdot D\leq ab \,\,\,\,\,\,\text{with}\,\,\,\,\,\, \frac{b^2-5}{2}\equiv 0\mod 2
\end{equation}
where $L^2 = 2d$.
\end{thm}

\begin{rem}\label{rem:boundD2}
With the notation of Theorems \ref{thm:alglineal-2} and \ref{thm:alglineal-10},  whenever $x:=D^2>0$ we can use Equation \eqref{eq:hodgeindex} to improve the second inequality in Equation \eqref{eq:-2classes} and \eqref{eq:-10classes} to be $\sqrt{2dx}\leq L\cdot D$. Notice that $x = 2b^2-2$ in the first Equation and $x = \frac{b^2-5}{2}$ in the second one.
\end{rem}

Whenever the degree of the polarization is high enough with respect to $a^2$, that the only possible cases to check in equations \eqref{eq:-2classes} and \eqref{eq:-10classes} correspond to effective divisors in $S$ with self-intersections $-2$ or $0$, therefore we obtain a cleaner result.

\begin{cor}\label{cor:movablehighdegree}
Let $(S,L)$ be a polarized K3 surface with $L^2 = 2d$. Suppose $d>\frac{4a^2}{3}$, then the class $L-a\delta$ is movable if and only if there are no effective divisors $D$ in $S$ such that $D^2 = 0$ and $L\cdot D\leq 2a$.
\end{cor}
\begin{proof}
Use the same notation as in Remark \ref{rem:boundD2}. If $x :=D^2\neq 0$, then we have the bound
\[
\sqrt{2dx} = 2\sqrt{d(b^2-1)} \leq L\cdot D \leq 2ab. 
\]
Notice that $b>1$ by hypothesis on $x$. We obtain the following inequality
\[
\frac{4a^2}{3} < d\leq \frac{a^2b^2}{(b^2-1)}
\]
and this is a contradiction since $b>1$. When $x=0$ we have $b=1$ and the possible cases are the ones stated in the Lemma. Since $L$ is ample, the case $x=-2$ cannot occur.
\end{proof}

The proof of the following corollary is the same as Corollary \ref{cor:movablehighdegree} and we will omit it.

\begin{cor}\label{cor:amplehighdegree}
Let $(S,L)$ be a polarized K3 surface with $L^2 = 2d$. Suppose $d>\frac{9a^2}{4}$, then the class $L-a\delta$ is big and nef (resp. ample) if and only if it is movable and there are no effective divisors $D$ in $S$ such that $D^2 = -2$ and $L\cdot D < a$ (resp. $L\cdot D \leq a$).
\end{cor}

\subsection{Examples}

For some applications the hypothesis on the degree given in Corollaries \ref{cor:movablehighdegree} and \ref{cor:amplehighdegree} are strong. However, it is possible to get rid of them, with the cost of getting more cases. In the following, we will give some examples for classes of the form $L-a\delta$ with $a\leq 2$ to give an idea of how to obtain these types of results.

\begin{prop}\label{prop:l-delta}
Let $(S,L)$ be a polarized K3 surface and suppose $L^2\geq 4$. The divisor $L-\delta$ is:
\begin{enumerate}
    \item In the interior of the movable cone if and only if there are no effective divisors $D$ in $S$ such that $D^2 = 0$ and $L\cdot D\leq 2$. 
    \item Ample if and only if it is movable and there are no effective divisors $D$ in $S$ such that $D^2=-2$ and $L\cdot D = 1$.
\end{enumerate}
\end{prop}
\begin{rem}
Notice that if $L^2 = 2$, then $q(L-\delta) = 0$ and hence $L-\delta$ cannot be in the interior of the movable cone. 
\end{rem}
\begin{proof}[Proof of Proposition \ref{prop:l-delta}]
By Corollary \ref{cor:movablehighdegree} we have that $L-\delta$ is in the interior of the movable cone if and only if there are no effective divisors $D$ with $D^2 = 0$ and $L\cdot D\leq 2$. By Corollary \ref{cor:amplehighdegree}, we have proven the proposition unless $L^2=4$.

Let us prove the case $L^2 = 4$. Using the notation in Theorem \ref{thm:alglineal-10}, with $d = 2$, suppose that $\kappa =2D-b\delta$ is a $(-10)$-class such that $q(L-\delta,\kappa)\leq 0$, then we must have
\begin{equation}\label{eq:-10classesa=1}
    0 < b^2\leq \frac{5d}{d-1} \,\,\,\,\,\,\text{and}\,\,\,\,\,\, 0 < L\cdot D \leq b \,\,\,\,\,\,\text{with}\,\,\,\,\,\, \frac{b^2-5}{2}\equiv 0\mod 2
\end{equation}
The trivial case, i.e. $b=1$, implies that $L-\delta$ is not on the interior of the ample cone if there exists an effective divisor $D$ such that $D^2 = -2$ and $L\cdot D = 1$.
The only non-trivial solution\footnote{Notice that $b=2$ is always a solution but it does not satisfy the last hypothesis in \eqref{eq:-10classesa=1}.} is given by $b=3$. In such case the bound on $L\cdot D$ given in the Remark \ref{rem:boundD2} implies that we must have $L\cdot D=3$. If we let $D$ denote a class with those invariants, then $(L-D)^2 = 0$ and $L\cdot(L-D) = 1$, this cannot hold since we assume $L-\delta$ to be movable.
\end{proof}

\begin{prop}\label{prop:lminus2delta}
Let $(S,L)$ be a polarized K3 surface and suppose $L^2\geq 10$. The divisor $L-2\delta$ is: 
\begin{enumerate}
    \item Movable if and only if there are no effective divisors $D$ such that $D^2 = 0$ and $L\cdot D \leq 4$.
    \item Ample if and only if it is movable, we have $L^2\neq 10$ and there does not exist an effective class $D$ such that $D^2 = -2$ and $L\cdot D \leq 2$. Furthermure, if $d=6$, there should be no effective divisor $D$ such that $D^2 = 2$ and $L\cdot D = 6$. If $d=9$ we require that $L$ is not of the form $3D$ for $D^2 = 2$.
\end{enumerate}
\end{prop}
\begin{proof}

We will use the same notation as in Corollaries  \ref{cor:movablehighdegree} and \ref{cor:amplehighdegree}, in particular we let $L^2 = 2d$. By Corollary \ref{cor:movablehighdegree} the only case left to check is $d = 5$. Using the notation in Theorem \ref{thm:alglineal-2}, suppose that $\kappa = D-b\delta$ is a $(-2)$-class such that $q(L-2\delta,\kappa)\leq 0$, then we must have
\begin{equation}\label{eq:-2classesa=2}
    0 < b^2\leq 5 \,\,\,\,\,\,\text{and}\,\,\,\,\,\, 0 < L\cdot D\leq 4b,  
\end{equation}

When the solution is $b=1$ we get that $L-2\delta$ it is not on the interior of the movable cone if there are effective divisors $D$ such that $D^2 = 0$ and $L\cdot D\leq 4$. The only non trivial solution is $b = 2$, then with the notation of Theorem \ref{thm:alglineal-2} we must have $x =6$. Since $\sqrt{60} < L\cdot D\leq 8$ by the Remark \ref{rem:boundD2}, the only possible case occurs when $L\cdot D=8$. In this case we notice that the class $L-D$ satisfies $(L-D)^2 = 0$ and $L\cdot(L-D) = 2$. This implies that $L-2\delta$ is not in the movable cone by the case $b=1$ in Equation \ref{eq:-2classesa=2}, and this proves the first assertion in the proposition.

For $(-10)$-classes, Corollary \ref{cor:amplehighdegree} proves the proposition unless $d\leq 9$. Suppose that $\kappa = 2D-b\delta$ is a $(-2)$-class such that $q(L-2\delta,\kappa)\leq 0$, then we must have 
\begin{equation}\label{eq:-10classesb=2}
    0 < b^2\leq \frac{5d}{d-4} \,\,\,\,\,\,\text{and}\,\,\,\,\,\, 0 < L\cdot D\leq 2b \,\,\,\,\,\,\text{with}\,\,\,\,\,\, \frac{b^2-5}{2}\equiv 0\mod 2
\end{equation}
 
In case $b=1$ we have $D^2 = -2$ with $L\cdot D\leq 2$. Hence $L-2\delta$ is not in the interior of the ample cone if there exists an effective class $D$ such that $D^2 = -2$ and $L\cdot D \leq 2$. From now on we will assume there are no such classes. We need to analyse case-by-case for the possible non-trivial solutions of $b$: 
\begin{enumerate}
    \item $d=5$: The class $2L-5\delta$ is a $(-10)$-class and $q(L-2\delta,2L-5\delta) = 0$, hence in this case the divisor $L-2\delta$ cannot be ample.
    \item $d=6$: The only possible value is $b=3$, then $D^2 = 2$ and as before we must have $\sqrt{24}\leq L\cdot D \leq 6$. If $L\cdot D=5$ we get that $(L-2D)^2=0$ and $L\cdot(L-2D) = 2$ but this is excluded by the previous part on $(-2)$-classes. In the last case we need to add a further restriction: there are no effective classes $D$ such that $D^2 = 2$ and $L\cdot D = 6$.
    \item $d=7$: the only possible value is $b=3$, then $\sqrt{28}\leq L\cdot D\leq 6$ implies that $L\cdot D=6$. Here the class $L-2D$ is a $(-2)$ curve such that $L\cdot(L-2D) = 2$, this is excluded by our assumption.
    \item $d=8$: the only possible value is $b=3$, then $L\cdot D=6$ and $3D-L$ is a $(-2)$ curve such that $L\cdot (3D-L) = 2$, this is excluded by our assumption.
    \item $d=9$: the only possible value is $b=3$ and $L\cdot D=6$. Here $L\cdot(L-3D) = 0$, with $D^2=2$. Hence $L$ is not ample \emph{unless} $L = 3D$.
\end{enumerate}
\end{proof}

\begin{rem}
Notice that the case $L^2 = 12$ adds a genuine new restriction because if there exists a divisor $D$ such that $D^2 = 2$ and $L\cdot D = 6$, then the lattice generated by $L$ and $D$ does not represent $-2$ and $0$ non-trivially.
\end{rem}

\section{Gaussian maps for K3 surfaces and an application}\label{section:gaussianmapsk3}

\subsection{K3 surfaces of Picard rank 1}
Let $(S,L)$ be a polarized K3 surface such that $\Pic(S)$ is generated by $L$. In this case, as a consequence of the very influential paper \cite{BMMMP14}, there is a very explicit description of the nef and movable cones for $S^{[2]}$. The nef cone of $S^{[2]}$ is generated by two rays: the first one is given by the induced divisor $L$ and the other one is determined by a Pell equation.

\begin{thm}\cite[Theorem 13.1]{BMMMP14}\label{thm:bayermacrigeneralk3}
Let $(S,L)$ be a polarized K3 surface such that $\Pic(S) = \Z L$ and let $L^2 = 2d$.
\begin{enumerate}
    \item Assume the equation $x^2 -4dy^2 = 5$ has no integral solutions.
    \begin{enumerate}
        \item If $d$ is a perfect square then $\Nef(S^{[2]}) = \langle L, L-\sqrt{d}\delta \rangle$.
        \item Else, the equation $x^2-dy^2 = 1$ has a minimal integral solution\footnote{meaning $a$ is minimal and $a,b>0$.} $(a,b)$, and $\Nef(S^{[2]}) = \langle L, L-d\frac{b}{a}\delta \rangle$.
    \end{enumerate}
    \item If the equation $x^2 -4dy^2 = 5$ has a minimal integral solution $(a,b)$ then $\Nef(S^{[2]}) = \langle L, L-2d\frac{b}{a}\delta\rangle$.
\end{enumerate}
\end{thm}

We use the theorem above, together with some vanishing results for the cohomology of Hyperk\"ahler manifolds, to infer the surjectivity of higher Gaussian maps in this case. The following lemma is well-known although we couldn't find a reference for it.

\begin{lem}\label{lem:cohomologyisotropicclass}
Let $D$ be a primitive\footnote{meaning that is not a multiple of another line bundle} line bundle on $S^{[2]}$ such that $D$ is nef and $q_{S^{[2]}}(D) = 0$, then the complete linear system associated with $D$ induces a fibration $\pi:S^{[2]}\to\P^2$. Moreover $h^i(S^{[2]},D) = 0$ for all $i>0$.
\end{lem}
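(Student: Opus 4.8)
The plan is to recognize $D$ as the pullback of the hyperplane class under a Lagrangian fibration $S^{[2]}\to\P^{2}$ and then to compute the cohomology of $D$ by pushing forward to $\P^{2}$. First I would use that $S^{[2]}$ is a moduli space of Gieseker-stable sheaves on $S$ — it parametrizes ideal sheaves of length-two subschemes, of Mukai vector $(1,0,-1)$ — so that the results of Bayer and Macrì \cite{BMMMP14} apply directly. Since $q_{S^{[2]}}(D)=0$, the Fujiki relation forces $\int_{S^{[2]}}D^{4}$ (which is proportional to $q_{S^{[2]}}(D)^{2}$) to vanish, so the nef class $D$ is not big; by \cite{BMMMP14} it is then semiample, and, after passing to a sufficiently divisible multiple, the associated morphism is a fibration $f:S^{[2]}\to B$ with connected fibres onto a normal projective variety $B$ with $0<\dim B<4$. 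By Matsushita's structure theorem, $f$ is a Lagrangian fibration, so $\dim B=2$; and, using Hwang's theorem that a smooth Lagrangian base is a projective space together with the smoothness of $B$ (known in the $K3^{[2]}$ case), one gets $B\cong\P^{2}$.

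Next I would argue that it is the \emph{complete} system $|D|$ that realizes $f$. The class $f^{*}\O_{\P^{2}}(1)$ spans the extremal ray of $\Nef(S^{[2]})$ on which $q_{S^{[2]}}$ vanishes, and it is primitive (this is part of the structure theory of Lagrangian fibrations of $K3^{[2]}$-type); as $D$ is by hypothesis the primitive generator of that ray, $D\cong f^{*}\O_{\P^{2}}(1)$, so $|D|$ is base-point free. Since $f$ has connected fibres and $\P^{2}$ is normal, $f_{*}\O_{S^{[2]}}\cong\O_{\P^{2}}$, whence the projection formula gives $h^{0}(S^{[2]},D)=h^{0}(\P^{2},\O_{\P^{2}}(1))=3$; thus $|D|=f^{*}|\O_{\P^{2}}(1)|$ and the morphism it induces is exactly $\pi:=f:S^{[2]}\to\P^{2}$.

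For the vanishing I would run the Leray spectral sequence of $f$ applied to $D=f^{*}\O_{\P^{2}}(1)$. By the projection formula and Matsushita's computation of the higher direct images of the structure sheaf of a Lagrangian fibration, $R^{q}f_{*}\O_{S^{[2]}}\cong\Omega^{q}_{\P^{2}}$ for all $q\geq 0$, so the second page reads $E_{2}^{p,q}=H^{p}(\P^{2},\Omega^{q}_{\P^{2}}(1))$. By Bott's formula every one of these groups vanishes except $E_{2}^{0,0}=H^{0}(\P^{2},\O_{\P^{2}}(1))\cong\C^{3}$: indeed $H^{0}(\P^{2},\Omega^{1}_{\P^{2}}(1))=0$ (a twist by $\O_{\P^{2}}(c)$ with $c>1$ would be needed), $H^{0}(\P^{2},\Omega^{2}_{\P^{2}}(1))=H^{0}(\P^{2},\O_{\P^{2}}(-2))=0$, and $H^{p}(\P^{2},\Omega^{q}_{\P^{2}}(1))=0$ for all $p\geq 1$. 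Hence the spectral sequence degenerates and $H^{i}(S^{[2]},D)=0$ for every $i>0$, which is what was asserted.

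I expect the real work to be foundational rather than computational, namely citing correctly (i) the semiampleness of nef isotropic classes (the hyperk\"ahler SYZ statement) for the manifold at hand, (ii) the identification of the base of a $K3^{[2]}$-type Lagrangian fibration with $\P^{2}$, which rests on the nontrivial smoothness of the base, and (iii) the primitivity of $f^{*}\O_{\P^{2}}(1)$, which is precisely what permits the descent from $|mD|$ to the complete system $|D|$. The primitivity assumption on $D$ is indispensable in the cohomological statement and cannot be weakened: for $c\geq 2$ one has $H^{0}(\P^{2},\Omega^{1}_{\P^{2}}(c))\neq 0$ and therefore $H^{1}(S^{[2]},f^{*}\O_{\P^{2}}(c))\neq 0$. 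Granting (i)--(iii), the rest — Leray together with Bott vanishing on $\P^{2}$ — is routine.
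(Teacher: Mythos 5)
Your proof is correct and follows essentially the same route as the paper: the existence of the fibration is the content of \cite[Theorem 1.5]{BMMMP14} (which the paper cites directly, where you reassemble it from semiampleness, Matsushita and Hwang), and the vanishing is obtained exactly as in the paper via $D=\pi^*\O_{\P^2}(1)$ by primitivity, Matsushita's isomorphism $R^q\pi_*\O_{S^{[2]}}\cong\Omega^q_{\P^2}$, the projection formula, and Bott vanishing plus the Euler sequence on $\P^2$ in the Leray spectral sequence.
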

\begin{proof}
The first part is Theorem 1.5 in \cite{BMMMP14}. To prove the second part let $\pi:S^{[2]}\to\P^2$ be the induced map given by $|D|$. Since $D$ is primitive we have $D=\pi^*(\O_{\P^2}(1))$. By \cite{Mat05} we have $R^i\pi_*\O_{S^{[2]}}\cong \Omega^i_{\P^2}$, therefore by projection formula
$R^p\pi_*\O_{S^{[2]}}(D) \cong \Omega^p_{\P^2}(1)$.
Bott vanishing for $\P^2$ yields $H^i(\Omega^p_{\P^2}(1)) = 0$ for $i>0$ and $H^0(\Omega^p_{\P^2}(1)) = 0$ by the Euler exact sequence. Hence the Leray spectral sequence degenerates and therefore $h^i(S^{[2]},D) = 0$ for all $i>0$.
\end{proof}

Recall also Verbitsky's vanishing result for the cohomology of line bundles on Hyperk\"ahler varieties. 

\begin{thm}\cite[Theorem 1.6]{Verbitsky07}\label{thm:vanishingverbitsky}
Let $X$ be a Hyperk\"ahler variety of dimension $2n$ and $L$ a line bundle on $X$. Denote its K\"ahler cone by $\KC_X\subseteq H^{1,1}(X,\R)$ and by $-\KC_X^\vee$ its opposite cone. Then the following hold:
\begin{enumerate}
    \item If $L\in \KC_X$, then $H^i(X,L) = 0$ for $i>n$.
    \item If $L\in -\KC_X^\vee$, then $H^i(X,L) = 0$ for $i<n$.
    \item If $L$ does not lie in $\KC_X\cup -\KC_X^\vee$, then $H^i(X,L) = 0$ for $i\neq n$
\end{enumerate}
\end{thm}

\begin{thm}\label{thm:surjectivitygaussmapsK3}
Let $(S,L)$ be a polarized K3 surface of degree $2d$ with $\Pic(S) = \Z L$, then $\gamma^k_L$ is surjective for all $k>0$.
\end{thm}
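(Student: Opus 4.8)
The plan is to invoke Theorem~\ref{thm:vanish}: it suffices to show that $H^1(S^{[2]}, L-(k+2)\delta) = 0$ for all $k > 0$. Here $L$ denotes the induced nef divisor on $S^{[2]}$ (the image of the polarization under $\Pic(S) \hookrightarrow \Pic(S^{[2]})$), and we work with the Beauville--Bogomolov form, for which $q_{S^{[2]}}(L) = L^2 = 2d$, $q_{S^{[2]}}(\delta) = -2$, and $L \cdot \delta = 0$. Thus $q_{S^{[2]}}(L - (k+2)\delta) = 2d - 2(k+2)^2$.

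First I would split into two regimes according to the sign of this number. When $2d > 2(k+2)^2$, the class $D_k := L - (k+2)\delta$ has $q_{S^{[2]}}(D_k) > 0$. I would then argue that $D_k$ lies in the closure of the movable (or big) cone and is in fact big and nef, or at least that a Kawamata--Viehweg-type vanishing applies: since $S^{[2]}$ is hyperk\"ahler it has trivial canonical bundle, so $H^i(S^{[2]}, D_k) = 0$ for $i > 0$ as soon as $D_k$ is big and nef. To check nefness I would use Theorem~\ref{thm:bayermacrinef}: the nef cone is $\langle L, L - \nu\delta\rangle$ where $\nu$ is the relevant Pell-type quantity (either $\sqrt d$, or $d b/a$, or $2d b/a$), and I would verify in each of the three cases that $\nu \geq k+2$ whenever $2d > 2(k+2)^2$, i.e. $d > (k+2)^2$. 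For instance $\sqrt d \geq k+2 \iff d \geq (k+2)^2$, and the bound $db/a \geq \sqrt d$ (resp. $2db/a \geq \sqrt d$) follows from the defining Pell equation since $a^2 = db^2 + 1$ forces $b/a$ close to $1/\sqrt d$ from below but $d b/a = d b /\sqrt{db^2+1} \geq \sqrt{d}\cdot\sqrt{db^2}/\sqrt{db^2+1}$, which is $\geq \sqrt d - \varepsilon$; a short estimate shows it still exceeds $k+2$ under $d > (k+2)^2$. So in the positive-$q$ regime $D_k$ is nef with positive square, hence big, and vanishing follows.

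The remaining, and more delicate, case is $q_{S^{[2]}}(D_k) \leq 0$, i.e. $d \leq (k+2)^2$. Here $D_k$ is generally \emph{not} effective, but that is not enough: I need $H^1$, not just $H^0$, to vanish. The idea is to use Serre duality on the fourfold $S^{[2]}$, $H^1(S^{[2]}, D_k) \cong H^3(S^{[2]}, -D_k)^\vee$ (again using $K_{S^{[2]}} = \O$), and then to analyze $-D_k = -L + (k+2)\delta$ together with the structure of $S^{[2]}$ near the boundary ray of the nef cone. When $q_{S^{[2]}}(D_k) = 0$ exactly — that is $d = (k+2)^2$, a perfect square — case (1a) of Theorem~\ref{thm:bayermacrinef} gives that $L - \sqrt d\,\delta = D_k$ spans a boundary ray, it is nef with $q = 0$, primitive after possibly dividing, and Lemma~\ref{lem:cohomologyisotropicclass} applies directly: $D_k$ (or its primitive part) is pulled back from $\O_{\P^2}(1)$ under a Lagrangian fibration $S^{[2]} \to \P^2$, so all higher cohomology vanishes. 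For $q_{S^{[2]}}(D_k) < 0$, I expect $D_k$ to lie strictly outside the movable cone, and I would push $D_k$ back inside by subtracting multiples of the boundary isotropic/nef class and running an inductive cohomology argument, or else show directly via the $\P^1$-bundle structure of $2\delta = \P(\Omega^1_S)$ and the exact sequences \eqref{eq:divisorinsym2twisted}, \eqref{eq:divisorinhilb2} that the cohomology is controlled by symmetric differentials $\mathrm{Sym}^k\Omega^1_S(2L)$ on $S$, whose $H^1$ vanishes for $\Pic(S) = \Z L$ by a Bogomolov-type argument.

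The main obstacle I anticipate is precisely this small-$d$ (non-big) range: there the vanishing is not a formal consequence of Kawamata--Viehweg, and one must use genuinely the fine classification of the nef/movable cone of $S^{[2]}$ from \cite{BMMMP14} — in particular distinguishing the three Pell cases — and exploit the Lagrangian fibration structure (Lemma~\ref{lem:cohomologyisotropicclass}) at the boundary together with a careful comparison with cohomology on $S$ itself. I would expect that, since $\Pic(S) = \Z L$ is so restrictive, in fact $d \leq (k+2)^2$ either forces $D_k$ onto the isotropic boundary ray (handled by Lemma~\ref{lem:cohomologyisotropicclass}) or makes $-D_k$ manifestly non-effective with vanishing $H^3$, so the theorem follows by combining these with the big-and-nef case above. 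Assembling these cases uniformly over all $k > 0$, while keeping track of the exact Pell thresholds, is the bookkeeping-heavy heart of the proof.
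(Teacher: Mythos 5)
Your overall frame matches the paper's: reduce via Theorem \ref{thm:vanish} to the vanishing of $H^1(S^{[2]},L-(k+2)\delta)$, use Kawamata--Viehweg when this class is big and nef (checking nefness against Theorem \ref{thm:bayermacrinef}), and use Lemma \ref{lem:cohomologyisotropicclass} on the isotropic boundary $d=(k+2)^2$. Two smaller points there are glossed over but repairable: in case (2) of Theorem \ref{thm:bayermacrinef} the inequality $2db/a\geq k+2$ under $d>(k+2)^2$ is not a one-line estimate (one has $2db/a<\sqrt d$ always, and the borderline $d=(k+2)^2+1$, $b=1$ only works because $4(k+2)^2+9$ is never a square for $k\geq 1$), and for $d=(k+2)^2$ you must also check that case (2) cannot occur, so that $L-(k+2)\delta$ really is nef before invoking the Lagrangian fibration.

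The genuine gap is the regime $d<(k+2)^2$, where $q_{S^{[2]}}(L-(k+2)\delta)<0$ and the class is neither nef nor big. This is not a corner case: the theorem fixes $(S,L)$ and asserts surjectivity for \emph{all} $k>0$, so for large $k$ this is the main case. None of your proposed routes works as stated. Serre duality gives $H^1(D_k)\cong H^3(-D_k)^\vee$, and non-effectivity of $-D_k$ controls $H^0$ (dually $H^4$), not $H^3$. Reducing to symmetric differentials via \eqref{eq:divisorinhilb2} and \eqref{eq:divisorinsym2twisted} is circular: extracting $H^1(S^{[2]},L-(k+2)\delta)=0$ from those sequences requires surjectivity of the restriction to $H^0(\mathrm{Sym}^k\Omega^1_S(2L))$, i.e.\ essentially the Gaussian-map surjectivity being proved, and the claimed vanishing of $H^1(S,\mathrm{Sym}^k\Omega^1_S(2L))$ ``by a Bogomolov-type argument'' is unsupported in this low-degree range. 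The paper's missing ingredient here is Verbitsky's vanishing theorem \cite[Theorem 1.6]{Verbitsky07}, which on the hyperk\"ahler fourfold $S^{[2]}$ kills $H^1$ of a line bundle whose class pairs negatively with the nef cone, i.e.\ lies outside $\Nef(S^{[2]})^\vee$; the proof then computes $\Nef(S^{[2]})^\vee$ in each Pell case and shows by the arithmetic of $x^2-dy^2=1$ (resp.\ $x^2-4dy^2=5$) that no integer $k+2$ can fall in the window between the nef boundary and the dual-cone boundary, so that every class is covered by Kawamata--Viehweg, Lemma \ref{lem:cohomologyisotropicclass}, or Verbitsky. Without this vanishing theorem for non-nef classes (or a genuine substitute), your argument does not establish the theorem for $d<(k+2)^2$.
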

\begin{proof}
By Theorem \ref{thm:vanish}, we need to show that the groups $H^1(S^{[2]}, L-(k+2)\delta)$ vanish for every $k>0$. Assume first that the equation $x^2-4dy^2=5$ has no integral solutions, then there are two possible cases:
\begin{itemize}
    \item If $d = t^2$ is a perfect square we are in the case (1).(a) of Theorem \ref{thm:bayermacrigeneralk3}, hence $\Nef(S^{[2]}) = \langle L, L-t\delta \rangle$ and by direct computation we obtain $\Nef(S^{[2]})^\vee = \langle \delta, L-t\delta\rangle$. Assume $k+2>t$, then the class $L-(k+2)\delta$ does not lie in $\KC_{S^{[2]}}\cup -\KC_{S^{[2]}}^\vee$, therefore by Theorem \ref{thm:vanishingverbitsky} we have $H^1(S^{[2]}, L-(k+2)\delta) = 0$. If $k+2 < t$, then the class $L-(k+2)\delta$ is ample because the ample cone is the interior of $\langle L, L-t\delta\rangle$. By Kawamata-Viehweg vanishing theorem we obtain  $H^1(S^{[2]}, L-(k+2)\delta) = 0$ . Finally, if $k+2 = t$, then $H^1(S^{[2]}, L-(k+2)\delta) = 0$ by Lemma \ref{lem:cohomologyisotropicclass} above.
    
    \item If $d$ is not a perfect square, we are in case (1).(b) of Theorem \ref{thm:bayermacrigeneralk3}. Hence the equation $x^2-dy^2 = 1$ has a minimal solution $(a,b)$ and $\Nef(S^{[2]}) = \langle L, L-d\frac{b}{a}\delta \rangle$. One computes $\Nef(S^{[2]})^\vee = \langle \delta, L-\frac{a}{b}\delta\rangle$. If  $k+2 \leq \frac{db}{a}$, then the class $L-(k+2)\delta$ is ample and $H^1(S^{[2]}, L-(k+2)\delta) = 0$ by Kawamata-Viehweg vanishing theorem. If $k+2> \frac{a}{b}$ then $H^1(S^{[2]}, L-(k+2)\delta) = 0$ by the third case in Theorem \ref{thm:vanishingverbitsky}. The only possible case left is
    \[
    \frac{db}{a} < k+2 \leq \frac{a}{b}.
    \]
    Assume first that $k+2 = \frac{a}{b}$, this gives 
    \[
    q(L - \frac{a}{b}\delta) = 2d - 2\frac{a^2}{b^2} = -\frac{2}{b^2},
    \]
    which is an integer. Since $(a,b)$ is a solution for the equation $x^2-dy^2 = 1$ we must have that $b=1$ and therefore  $a^2=1 + d^2$. The only solution to the last equation is $a=1$ and $d =0$, this cannot happen since by assumption $L$ is ample. Assume now that $k+2 <\frac{a}{b}$, notice that $\frac{a}{b}-\frac{db}{a} =\frac{1}{ab}<1$, therefore there are no integers between these bounds and this case cannot occur.
\end{itemize}

Suppose $x^2-4dy^2=5$ has integer solutions and let $(a,b)$ be a minimal solution, then by the case (2) of Theorem \ref{thm:bayermacrigeneralk3} we have $\Nef(S^{[2]}) = \langle L, L-2d\frac{b}{a}\delta \rangle$ and $\Nef(S^{[2]})^\vee = \langle \delta, L-\frac{a}{2b}\delta\rangle$. The only possible case where the vanishing of $H^1(S^{[2]}, L-(k+2)\delta)$ is not guaranteed is when
\[
\frac{2db}{a} < k+2 \leq \frac{a}{2b}.
\]
Assume $k+2 = \frac{a}{2b}$, then $q(L-\frac{a}{2b}\delta) = 2d - \frac{2d^2+5}{2b^2} = -\frac{5}{2b^2}$ is never an integer, hence this case cannot happen and therefore $k+2 < \frac{a}{2b}$. In order to have $\frac{a}{2b} - \frac{2db}{a} = \frac{5}{2ba}>1$ we must have $a =1,2$, one check directly that this not gives solutions to the equation $x^2-4dy^2=5$.
\end{proof}

\subsubsection{An application to higher Gaussian maps of curves}

Having proved that the higher Gaussian maps are surjective for a general polarized K3 surface $(S,L)$ we will prove an analogous result for a smooth member in the linear system $|L|$, which is a canonical curve by adjunction. To do this we will follow a similar strategy to the one employed by Colombo and Frediani in \cite[Main Theorem]{ColFred10}, where the authors prove a vanishing result for linear systems on $S\times S$ to deduce surjectivity of the second Gaussian map.

Let $C$ be a smooth curve in the linear system $|L|$. Consider the following exact sequence induced by restriction:

\begin{equation}\label{eq:p1}
\begin{tikzcd}
0\ar[r] & \text{Sym}^k\Omega^1_S(L)\ar[r] & \text{Sym}^k\Omega^1_S(2L)\ar[r,"p_1"] & \text{Sym}^k\Omega^1_S|_C(2K_C)\ar[r] & 0. 
\end{tikzcd}  
\end{equation}

The conormal bundle of $C$ in $S$ sits in the following exact sequence

\[
\begin{tikzcd}
0\ar[r] & O_C(-K_C) \ar[r] & \Omega^1_S|_C \ar[r] & \O_C(K_C) \ar[r] & 0, 
\end{tikzcd}
\]

taking symmetric powers we give for each $p>0$ the following exact sequence 
\[
\begin{tikzcd}
0\ar[r] & O_C(-pK_C) \ar[r] & \text{Sym}^p\Omega^1_S|_C \ar[r] & \text{Sym}^{p-1}\Omega^1_S|_C(K_C) \ar[r] & 0. 
\end{tikzcd}
\]

Recall that $\Omega^1_S\cong T_S$. We set $p=k$ in the sequence above, dualize and then tensor with $\O_C(2K_C)$ to obtain

\begin{equation}\label{eq:p2}
\begin{tikzcd}
0\ar[r] & \text{Sym}^{k-1}\Omega^1_S|_C(K_C) \ar[r] & \text{Sym}^k\Omega^1_S|_C(2K_C) \ar[r,"p_2"] & \O_C((k+2)K_C) \ar[r] & 0. 
\end{tikzcd}
\end{equation}

We will reduce to analyzing the Picard group of $\P(\Omega^1_S)$, which is generated by the tautological class $\xi$ and the pullback of $\Pic(S)$. Since $\P(\Omega^1_S)$ is a subvariety of $S^{[2]}$, a good knowledge of the nef cone of $S^{[2]}$ gives lower bounds on the \emph{slope} of the nef cone of $\P(\Omega^1_S)$.

\begin{thm}\cite[Proposition 3.2]{GounelasOttem20}\label{thm:gounelasottem}
Let $(S,L)$ be a polarized K3 surface such that $\Pic(S) = \Z L$ and let $L^2 = 2d$. If $d\geq 4(k+2)^2+\frac{5}{4}$, then $L+(k+2)\xi$ is big and nef on $\P(\Omega^1_S)$.
\end{thm}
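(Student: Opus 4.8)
The plan is to realise $\P(\Omega^1_S)$ as the exceptional divisor $2\delta\subset S^{[2]}$ of the Hilbert--Chow morphism $\mu$ and then to deduce nefness and bigness of $L+(k+2)\xi$ from two inputs already available: the Bayer--Macrì description of $\Nef(S^{[2]})$ in Theorem \ref{thm:bayermacrinef}, and an elementary Chern-class computation on the threefold $\P(\Omega^1_S)$. Write $j\colon\P(\Omega^1_S)\hookrightarrow S^{[2]}$ for the inclusion and $\rho:=\mu|_{\P(\Omega^1_S)}\colon\P(\Omega^1_S)\to S$ for the $\P^1$-bundle structure, and (following the statement) write $L$ also for $\rho^*\O_S(L)$. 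From the identifications recorded above, $j^*\O_{S^{[2]}}(-\delta)=\O(\xi)$; and since $\mu\circ j$ factors as $\P(\Omega^1_S)\xrightarrow{\rho}\Delta\hookrightarrow S^{(2)}$ while $\O_{S^{(2)}}(L^{(2)})|_\Delta\cong\O_S(2L)$, also $j^*\O_{S^{[2]}}(L)=\rho^*\O_S(2L)$. Hence
\[
j^*\O_{S^{[2]}}\!\bigl(L-2(k+2)\delta\bigr)\;\cong\;\O_{\P(\Omega^1_S)}\!\bigl(L+(k+2)\xi\bigr)^{\otimes 2},
\]
so it suffices to show (i) that $L-2(k+2)\delta$ is nef on $S^{[2]}$, and (ii) that $(L+(k+2)\xi)^3>0$ on $\P(\Omega^1_S)$.

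For (i), since the restriction of a nef class to a subvariety is nef, it is enough to see that $L-2(k+2)\delta$ is nef on $S^{[2]}$. By Theorem \ref{thm:bayermacrinef}, $\Nef(S^{[2]})=\langle L,\,L-t\delta\rangle$ with $t$ one of $\sqrt d$, $db/a$, $2db/a$ according to the relevant Pell equation, and $L-2(k+2)\delta$ lies in this cone precisely when $2(k+2)\le t$. I would establish this inequality under the hypothesis $d\ge 4(k+2)^2+\tfrac54$ by repeating the case distinction in the proof of Theorem \ref{thm:surjectivitygaussmapsK3}: in the perfect-square case $t=\sqrt d\ge 2(k+2)$ because $4(k+2)^2=(2(k+2))^2$ is itself a square; in the case $x^2-dy^2=1$ the inequality $2(k+2)\le db/a$ reduces, in the worst case $b=1$, to $d\bigl(d-4(k+2)^2\bigr)\ge 4(k+2)^2$; and in the case $x^2-4dy^2=5$ it reduces, in the worst case $b=1$, to $d\bigl(d-4(k+2)^2\bigr)\ge 5(k+2)^2$, which is exactly what the summand $\tfrac54$ is calibrated to give.

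For (ii), on the threefold $\P(\Omega^1_S)$ one has $c_1(\Omega^1_S)=c_1(K_S)=0$ and $c_2(\Omega^1_S)=e(S)=24$, so the Grothendieck relation $\xi^2=\rho^*c_1(\Omega^1_S)\cdot\xi-\rho^*c_2(\Omega^1_S)$ gives $\xi^3=-24$, $L\cdot\xi^2=0$, $L^2\cdot\xi=2d$ and $L^3=0$. Therefore
\[
(L+(k+2)\xi)^3=6d(k+2)-24(k+2)^3=6(k+2)\bigl(d-4(k+2)^2\bigr)>0
\]
under the hypothesis, and a nef divisor of positive top self-intersection is big; so (i) and (ii) together conclude.

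The only genuine work is the arithmetic in step (i): it is the same Pell-equation bookkeeping as in Theorem \ref{thm:surjectivitygaussmapsK3}, but the constant must be tracked carefully, and it is precisely the equation $x^2-4dy^2=5$ that forces the shape $4(k+2)^2+\tfrac54$ rather than the cleaner $4(k+2)^2$. I would also be careful about the two normalisations hidden in $j^*\O_{S^{[2]}}(-\delta)=\O(\xi)$ and $j^*\O_{S^{[2]}}(L)=\rho^*\O_S(2L)$, since conflating the polarization on $\P(\Omega^1_S)$ with $\rho^*\O_S(L)$ as opposed to its square shifts the threshold in $d$ by a factor of $4$. Finally, the identity $(L+(k+2)\xi)^3=6(k+2)\bigl(d-4(k+2)^2\bigr)$ already shows the bound is essentially sharp: for $d<4(k+2)^2$ the class fails to be nef, and for $d=4(k+2)^2$ a perfect square one is in case (1)(a) with $t=\sqrt d=2(k+2)$, so $L-2(k+2)\delta$ is an isotropic Lagrangian-fibration pullback and $L+(k+2)\xi$ is nef but not big.
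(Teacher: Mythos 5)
The paper never proves this statement --- it is imported verbatim from \cite{GounelasOttem20} --- so there is no internal argument to compare yours with; judged on its own, your proof is correct and complete, and it only uses ingredients the paper already quotes (the Bayer--Macr\`i description of $\Nef(S^{[2]})$ in Theorem \ref{thm:bayermacrinef} and the identifications on the exceptional divisor), which is also, in substance, how the cited reference proceeds. The two delicate points are handled correctly. First, the normalisation: pulling back via the double cover $\widetilde{S\times S}\to S^{[2]}$ and restricting to the ramification divisor, where $(L\boxtimes L)|_{\Delta_S}=\O_S(2L)$, indeed gives $j^*\O_{S^{[2]}}(L)\cong\rho^*\O_S(2L)$ together with $j^*\O_{S^{[2]}}(-\delta)\cong\O(\xi)$, so the class to restrict is $L-2(k+2)\delta$, and this factor of $2$ is precisely what produces the threshold $4(k+2)^2$ rather than $(k+2)^2$. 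Second, the Pell bookkeeping: nefness of $L-2(k+2)\delta$ amounts to $2(k+2)\le t$ with $t=\sqrt d$, $d\tfrac{b}{a}$ or $2d\tfrac{b}{a}$; since $db/\sqrt{db^2+1}$ and $db/\sqrt{4db^2+5}$ are increasing in $b$, the worst case is $b=1$ in case (2), where the needed inequality $d\bigl(d-4(k+2)^2\bigr)\ge 5(k+2)^2$ follows from $d\ge 4(k+2)^2+\tfrac54$, exactly as you say. Combined with the Grothendieck-relation computation $(\rho^*L+(k+2)\xi)^3=6(k+2)\bigl(d-4(k+2)^2\bigr)>0$ and the fact that a nef class with positive top self-intersection is big, this yields the statement; your closing observation that the bound is essentially sharp (nefness fails for $d<4(k+2)^2$, and bigness fails at $d=4(k+2)^2$ a perfect square) is a correct bonus not claimed by the theorem.
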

\begin{cor}\label{cor:vanishingk3general}
Let $(S,L)$ be a polarized K3 surface such that $\Pic(S) = \Z L$ and let $L^2 = 2d$. If $d\geq 4(k+2)^2+\frac{5}{4}$, then $H^1(\P(\Omega^1_S), L+k\xi) = 0$.
\end{cor}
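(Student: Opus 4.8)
The plan is to deduce the vanishing of $H^1(\P(\Omega^1_S), L+k\xi)$ from the big-and-nef class produced by Theorem \ref{thm:gounelasottem} by a standard Kawamata--Viehweg argument, after identifying the correct twist. First I would recall that on the $\P^1$-bundle $\rho:\P(\Omega^1_S)\to S$ the relative canonical bundle is $\omega_{\P(\Omega^1_S)/S}\cong \O(-2\xi)\otimes \rho^*\det\Omega^1_S$, and since $S$ is a K3 surface $\det\Omega^1_S = \O_S$ and $\omega_S = \O_S$; hence $\omega_{\P(\Omega^1_S)} \cong \O(-2\xi)$. Therefore
\[
(L + (k+2)\xi) + K_{\P(\Omega^1_S)} \;=\; L + (k+2)\xi - 2\xi \;=\; L + k\xi .
\]
This is exactly the class whose $H^1$ we want to kill, so the corollary will follow once we know $L+(k+2)\xi$ is big and nef — which is precisely the hypothesis $d\geq 4(k+2)^2 + \tfrac54$ feeding into Theorem \ref{thm:gounelasottem}.

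The remaining step is to invoke Kawamata--Viehweg vanishing: for a smooth projective variety $X$ and a big and nef divisor $A$, one has $H^i(X, K_X + A) = 0$ for all $i>0$. Applying this with $X = \P(\Omega^1_S)$ and $A = L+(k+2)\xi$ gives $H^1(\P(\Omega^1_S), K_{\P(\Omega^1_S)} + A) = H^1(\P(\Omega^1_S), L+k\xi) = 0$, as desired.

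The only real point requiring care is the canonical bundle computation on $\P(\Omega^1_S)$: one must fix the convention for $\P(\mathcal{E})$ (whether it parametrizes lines or quotients) consistently with the normalization used in the paper, where $-\delta|_{\P(\Omega^1_S)} = \xi$ and $\mu_*\O_{\P(\Omega^1_S)}(k\xi)\cong \mathrm{Sym}^k\Omega^1_S$, so that $\xi$ is the relative hyperplane class with $\rho_*\O(\xi)\cong \Omega^1_S$. With that convention the relative Euler sequence gives $\omega_{\P(\Omega^1_S)/S}\cong \O(-2\xi)\otimes\rho^*\det(\Omega^1_S)$, and the vanishing $\det\Omega^1_S=\O_S$ from $S$ being K3 closes the argument. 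Everything else is a direct application of results already available: Theorem \ref{thm:gounelasottem} for the positivity and Kawamata--Viehweg for the vanishing.
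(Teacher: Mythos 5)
Your proposal is correct and follows the same route as the paper: Theorem \ref{thm:gounelasottem} supplies the big and nef class $L+(k+2)\xi$, Kawamata--Viehweg then kills $H^1$ of $K_{\P(\Omega^1_S)}+L+(k+2)\xi$, and the identification of this with $L+k\xi$ via $K_{\P(\Omega^1_S)}=-2\xi$ (which the paper leaves implicit here, making it explicit only in the proof of Theorem \ref{thm:higherGauss}) is exactly the canonical bundle computation you carry out. Your extra care with the $\mathrm{Proj}$ convention for $\xi$ is consistent with the paper's normalization $\mu_*\O_{\P(\Omega^1_S)}(k\xi)\cong \mathrm{Sym}^k\Omega^1_S$, so nothing is missing.
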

\begin{proof}
With the numerical hypothesis, by Theorem \ref{thm:gounelasottem} we know that $L + (k+2)\xi$ is big and nef on $\P(\Omega^1_S)$. Hence the result follows by Kawamata–Viehweg vanishing theorem, since $H^1(\P(\Omega^1_S), L +k\xi) = H^1(\P(\Omega^1_S), L +(k+2)\xi + K_{\P(\Omega^1_S)})$.
\end{proof}

The bounds given above are sufficient to deduce the surjectivity of higher Gaussian maps for \emph{general} canonical curves of high enough genus.

\begin{thm}\label{thm:higherGauss}
Let $(S,L)$ be a polarized K3 surface such that $\Pic(S) = \Z L$ with $L^2 = 2d$ and $k>1$ an integer. If $d\geq 4(k+2)^2 + \frac{5}{4}$ and $C$ is a smooth hyperplane section of $S$, then $\gamma_C^k$ is surjective.
\end{thm}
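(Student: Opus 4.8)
The plan is to deduce surjectivity of $\gamma^k_C$ from the surjectivity of $\gamma^k_L$ (which holds under the stated hypothesis $d \geq 4(k+2)^2 + \tfrac54$ by Theorem \ref{thm:surjectivitygaussmapsK3}) together with the functoriality of the Gaussian maps applied to the inclusion $j\colon C \hookrightarrow S$. Concretely, $\gamma^k_L$ surjects onto $H^0(S, \mathrm{Sym}^k\Omega^1_S(2L))$, and the functoriality square for $j$ relates this to $\gamma^k_{K_C}$ via the restriction map $H^0(S, \mathrm{Sym}^k\Omega^1_S(2L)) \to H^0(C, \mathrm{Sym}^k\Omega^1_S|_C(2K_C))$ and the pullback of symmetric differentials $\mathrm{Sym}^k\Omega^1_S|_C(2K_C) \to \O_C((k+2)K_C)$ — which is exactly the composite $p_2 \circ p_1$ built from the exact sequences \eqref{eq:p1} and \eqref{eq:p2}. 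So it suffices to show that both $p_1$ and $p_2$ are surjective on $H^0$, i.e. that the connecting maps land in groups that vanish.

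The key steps, in order, are as follows. First I would invoke Theorem \ref{thm:surjectivitygaussmapsK3} to get that $\gamma^k_L \colon H^0(S\times S, \I^k_{\Delta_S}(L\boxtimes L)) \to H^0(S, \mathrm{Sym}^k\Omega^1_S(2L))$ is surjective. Second, chase the functoriality diagram of Section \ref{subsection:Preliminaries} for $j\colon C\hookrightarrow S$: the image of $\gamma^k_C$ contains the image of $H^0(S,\mathrm{Sym}^k\Omega^1_S(2L))$ under $d^k j$, so $\gamma^k_C$ is surjective as soon as $d^k j$ is surjective on global sections. Third, factor $d^k j$ (on sections) as $H^0(p_2)\circ H^0(p_1)$ using \eqref{eq:p1} and \eqref{eq:p2}. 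For $H^0(p_1)$ to be surjective it is enough that $H^1(S, \mathrm{Sym}^k\Omega^1_S(L)) = 0$; since $\mathrm{Sym}^k\Omega^1_S(L) = \mu_*(\O_{\P(\Omega^1_S)}(k\xi))\otimes L$ and the higher direct images are handled as in Section \ref{section:gaussianandhilbertscheme}, this reduces to $H^1(\P(\Omega^1_S), L + k\xi) = 0$, which is precisely Corollary \ref{cor:vanishingk3general} under the hypothesis $d \geq 4(k+2)^2 + \tfrac54$. For $H^0(p_2)$ to be surjective it is enough that $H^1(C, \mathrm{Sym}^{k-1}\Omega^1_S|_C(K_C)) = 0$; restricting the sequence $0 \to \mathrm{Sym}^{k-1}\Omega^1_S(L) \to \mathrm{Sym}^{k-1}\Omega^1_S(2L) \to \mathrm{Sym}^{k-1}\Omega^1_S|_C(2K_C) \to 0$ and peeling off one more filtration step, or directly relating it to $H^1(\P(\Omega^1_S), L + (k-1)\xi)$, this again follows from the same big-and-nef statement with $k$ replaced by $k-1$ (a fortiori satisfied since $d \geq 4(k+2)^2 + \tfrac54 \geq 4(k+1)^2 + \tfrac54$), via Kawamata–Viehweg.

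I would then assemble: $H^0(p_1)$ and $H^0(p_2)$ surjective $\Rightarrow$ $d^k j$ surjective on global sections $\Rightarrow$ $\gamma^k_C$ surjective. The main obstacle I anticipate is bookkeeping the vanishing of the intermediate cohomology groups $H^1(S,\mathrm{Sym}^{k-1}\Omega^1_S(L))$ and $H^1(S,\mathrm{Sym}^k\Omega^1_S(L))$: one must carefully pass between cohomology of twisted symmetric powers of $\Omega^1_S$ on $S$ and cohomology of line bundles $L + j\xi$ on $\P(\Omega^1_S)$ (using $\mu_*\O_{\P(\Omega^1_S)}(j\xi) \cong \mathrm{Sym}^j\Omega^1_S$ and the vanishing of higher direct images, exactly as set up before Theorem \ref{thm:vanish}), and then check that the numerical hypothesis $d \geq 4(k+2)^2 + \tfrac54$ indeed makes all the relevant classes big and nef so that Kawamata–Viehweg applies — this is where Theorem \ref{thm:gounelasottem} and Corollary \ref{cor:vanishingk3general} do the real work. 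Everything else is a diagram chase.
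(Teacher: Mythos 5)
Your overall strategy is the paper's: use functoriality of the Gaussian maps for $C\hookrightarrow S$, surjectivity of $\gamma^k_L$ from Theorem \ref{thm:surjectivitygaussmapsK3} (which, by the way, needs no degree hypothesis at all — it holds for every $d$), and then prove surjectivity of $p_1$ and $p_2$ from \eqref{eq:p1} and \eqref{eq:p2}. Your treatment of $p_1$ is correct and matches the paper: $H^1(S,\mathrm{Sym}^k\Omega^1_S(L))\cong H^1(\P(\Omega^1_S),L+k\xi)$ vanishes by Corollary \ref{cor:vanishingk3general}.

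The gap is in the $p_2$ step. You need $H^1(C,\mathrm{Sym}^{k-1}\Omega^1_S|_C(K_C))=0$, and you assert this follows "directly" from $H^1(\P(\Omega^1_S),L+(k-1)\xi)=0$, i.e.\ from $H^1(S,\mathrm{Sym}^{k-1}\Omega^1_S(L))=0$. But the group you need lives on $C$, not on $S$: since $K_C=L|_C$, the relevant sheaf is $(\mathrm{Sym}^{k-1}\Omega^1_S(L))|_C$, and the restriction sequence $0\to \mathrm{Sym}^{k-1}\Omega^1_S\to \mathrm{Sym}^{k-1}\Omega^1_S(L)\to (\mathrm{Sym}^{k-1}\Omega^1_S(L))|_C\to 0$ gives $H^1(S,\mathrm{Sym}^{k-1}\Omega^1_S(L))\to H^1(C,\cdot)\to H^2(S,\mathrm{Sym}^{k-1}\Omega^1_S)$, so you must also kill the $H^2$ term (equivalently, by Serre duality on $C$ as in the paper, kill $H^0(C,\mathrm{Sym}^{k-1}T_S|_C)$, which forces you to kill $H^0(S,\mathrm{Sym}^{k-1}T_S)$). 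By Serre duality $H^2(S,\mathrm{Sym}^{k-1}\Omega^1_S)\cong H^0(S,\mathrm{Sym}^{k-1}T_S)^\vee$, and its vanishing for $k>1$ is \emph{not} a formal consequence of Kawamata–Viehweg on $\P(\Omega^1_S)$: it is Kobayashi's theorem that a K3 surface carries no nonzero symmetric tensor fields, which is exactly the extra input the paper invokes and which is absent from your argument. (Note also that the auxiliary sequence you wrote down has the wrong twist — it computes $\mathrm{Sym}^{k-1}\Omega^1_S|_C(2K_C)$, not $\mathrm{Sym}^{k-1}\Omega^1_S|_C(K_C)$ — and "peeling off one more filtration step" does not repair this.) With Kobayashi's vanishing added, your argument closes exactly as in the paper; without it, the Kawamata–Viehweg vanishing on $S$ does not transfer to the needed vanishing on $C$.
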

\begin{proof}
As in  \cite{ColFred10} consider the commutative diagram induced by naturality of the Gaussian maps:
\[
\begin{tikzcd}
H^0(S\times S,\mathcal{I}_\Delta^k(L\boxtimes L))\ar[r,"\gamma^k_{L}"]\ar[dd,"res"] & H^0(S,\text{Sym}^k\Omega^1_S(2L))\ar[dr,"p_1"]\ar[dd,"res"]\\
 & & H^0((\text{Sym}^k\Omega^1_S)|_C(2K_C))\ar[dl,"p_2"]\\
 H^0(C\times C,\mathcal{I}_\Delta^k(K_C\boxtimes K_C))\ar[r,"\gamma_C^k"] & H^0(C, (k+2)K_C)
\end{tikzcd}
\]
By Theorem \ref{thm:surjectivitygaussmapsK3} the map $\gamma^k_{\O_S(L)}$ is always surjective. In order to verify that $\gamma^k_C$ is surjective we will show that both $p_1$ and $p_2$ are surjective. The morphism $p_1$ is given in the exact sequence \eqref{eq:p1}, and this is surjective if $H^1(S,\text{Sym}^k\Omega^1_S(L)) = 0$. The vanishing of the Leray spectral sequence yields
\[
H^1(S,\text{Sym}^k\Omega^1_S(L)) \cong H^1(\P(\Omega^1_S), L+k\xi) 
\]
and the last group is zero if $d\geq 4(k+2)^2 + \frac{5}{4}$ by Corollary \ref{cor:vanishingk3general}. It remains to show that also $p_2$ is surjective. Since $p_2$ is given via the exact sequence \eqref{eq:p2}, it suffices to show that 
\[
H^1(C,\text{Sym}^{k-1}\Omega^1_S|_C(K_C)) = H^0(C,\text{Sym}^{k-1}T_S|_C)^\vee 
\]
is zero. Restriction to $C$ gives the exact sequence below 
\[
\begin{tikzcd}
0\ar[r] & \text{Sym}^{k-1}T_S(-L) \ar[r] & \text{Sym}^{k-1}T_S\ar[r] & \text{Sym}^{k-1}T_S|_C \ar[r] & 0. 
\end{tikzcd}
\]
A general result due to Kobayashi \cite{Kobayashi80} implies that
$H^0(S,\text{Sym}^{k-1}T_S) = 0$ for all $k>1$, hence $H^0(S,\text{Sym}^{k-1}T_S|_C)$ injects in $H^1(S,\text{Sym}^{k-1}T_S(-L))$, but since
\[
H^1(S,\text{Sym}^{k-1}T_S(-L))^\vee \cong H^1(S,\text{Sym}^{k-1}\Omega^1_S(L)) \cong H^1(\P(\Omega^1_S), L+(k-1)\xi)
\]
we conclude as before that this last group vanishes if $d\geq 4(k+2)^2 +\frac{5}{4}$ by Corollary \ref{cor:vanishingk3general}.
\end{proof}

\subsection{Arbitrary K3 surfaces}

We finish this section by consider the case of arbitrary K3 surfaces. Let $(S,L)$ be a polarized K3 surface. If we let the degree of the divisor $L$ be sufficiently big with respect to $k$ (but still quadratic in $k$), Corollaries \ref{cor:movablehighdegree} and \ref{cor:amplehighdegree} give a necessary and sufficient criterion for a class to be ample, hence we get the surjectivity of the $k$-th higher Gaussian map for hyperplane sections of arbitrary K3 surfaces.

\begin{thm}\label{thm:highergaussmapsgeneral}
Let $(S,L)$ be any polarized K3 surface of degree $2d$ and $k>1$ an integer. If $d>\frac{9}{4}(k+2)^2$, $C$ is a smooth hyperplane section of $|L|$ there are no classes as in Corollaries \ref{cor:movablehighdegree} and \ref{cor:amplehighdegree} for $a=k+2$, then $\gamma^k_C$ is surjective.
\end{thm}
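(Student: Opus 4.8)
The plan is to reprise verbatim the strategy used in the proof of Theorem \ref{thm:higherGauss}, replacing the single-polarization vanishing (Corollary \ref{cor:vanishingk3general}, which used Theorem \ref{thm:gounelasottem}) with the arbitrary-Picard-rank analysis built from Corollary \ref{cor:kappaclasses} and Lemmas \ref{lem:movablehighdegree} and \ref{lem:amplehighdegree}. Concretely, I would set up the same commutative diagram obtained by naturality of the Gaussian maps, with $\gamma^k_{\O_S(L)}$ on top and $\gamma^k_C$ on the bottom, factoring the right-hand vertical restriction map through $p_1$ (from the exact sequence \eqref{eq:p1}) and $p_2$ (from the exact sequence \eqref{eq:p2}). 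Surjectivity of $\gamma^k_C$ then follows once we know: (i) $\gamma^k_{\O_S(L)}$ is surjective; (ii) $p_1$ is surjective, for which it suffices that $H^1(S,\mathrm{Sym}^k\Omega^1_S(L)) = 0$; and (iii) $p_2$ is surjective, for which — via Kobayashi's vanishing $H^0(S,\mathrm{Sym}^{k-1}T_S)=0$ as in Theorem \ref{thm:higherGauss} — it suffices that $H^1(S,\mathrm{Sym}^{k-1}\Omega^1_S(L)) = 0$.

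The first task is to upgrade Theorem \ref{thm:surjectivitygaussmapsK3} to this setting: by Theorem \ref{thm:vanish} it is enough to show $H^1(S^{[2]}, L-(k+2)\delta) = 0$, and since $d > 9(k+2)^2$ forces $q(L-(k+2)\delta) = 2d - 2(k+2)^2 > 0$, the hypothesis $d > 9a^2/4$ of Lemma \ref{lem:amplehighdegree} holds with $a=k+2$; the assumption that there are no forbidden classes as in Lemmas \ref{lem:movablehighdegree} and \ref{lem:amplehighdegree} then says exactly that $L-(k+2)\delta$ is ample (in fact big and nef suffices), so Kawamata–Viehweg gives the vanishing and hence surjectivity of $\gamma^k_{\O_S(L)}$. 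For steps (ii) and (iii), I would translate the symmetric-power cohomology on $S$ into cohomology on $\P(\Omega^1_S)$ exactly as in the proof of Theorem \ref{thm:higherGauss}: $H^1(S,\mathrm{Sym}^k\Omega^1_S(L)) \cong H^1(\P(\Omega^1_S), L+k\xi)$ and $H^1(S,\mathrm{Sym}^{k-1}\Omega^1_S(L)) \cong H^1(\P(\Omega^1_S), L+(k-1)\xi)$ via degeneration of the Leray spectral sequence for $\mu|_{\P(\Omega^1_S)}$. Writing these as $H^1$ of $K_{\P(\Omega^1_S)}$ plus $L+(k+2)\xi$ and $L+(k+1)\xi$ respectively, it suffices by Kawamata–Viehweg to check that $L+(k+2)\xi$ is big and nef on $\P(\Omega^1_S)$ — the class $L+(k+1)\xi$ then being a sum of a nef and a big-and-nef class, hence big and nef as well.

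The remaining point — and the one where the arbitrary-Picard-rank hypothesis does real work — is establishing that $L+(k+2)\xi$ is big and nef on $\P(\Omega^1_S)$. Here I would use that $\P(\Omega^1_S) = 2\delta \subset S^{[2]}$ with $-\delta|_{\P(\Omega^1_S)} = \xi$, so that $L+(k+2)\xi$ is the restriction of $L-(k+2)\delta$ (as a class on $S^{[2]}$, via the identification \eqref{eq:pics2}) to $\P(\Omega^1_S)$. Since we have just argued that $L-(k+2)\delta$ is big and nef on $S^{[2]}$, its restriction to any subvariety is nef; bigness of the restriction is where one must argue slightly more carefully — mirroring Proposition \ref{thm:gounelasottem} of \cite{GounelasOttem20}, one uses that on $S^{[2]}$ the class $L-(k+2)\delta$ is not merely on the boundary but strictly inside the ample cone (again guaranteed by the "no forbidden classes" hypothesis, with the strict inequality $d > 9(k+2)^2$ giving slack), so that a small multiple of the movable/big class $L$ can be subtracted while staying nef, which forces bigness of the restriction to the divisor $2\delta$. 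I expect this last step — controlling bigness of the restriction to $\P(\Omega^1_S)$ purely from the position of $L-(k+2)\delta$ inside $\Nef(S^{[2]})$, rather than from an explicit Pell-equation computation as in Theorem \ref{thm:gounelasottem} — to be the main obstacle, and the place where the precise numerical threshold $9(k+2)^2$ (as opposed to the $4(k+2)^2$ of the Picard-rank-one case) gets used.
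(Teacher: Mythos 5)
Your proposal is correct and follows essentially the same route as the paper: Lemmas \ref{lem:movablehighdegree} and \ref{lem:amplehighdegree} (with $a=k+2$, allowed since $d>9(k+2)^2>\tfrac{9(k+2)^2}{4}$) give that $L-(k+2)\delta$ is big and nef (indeed ample) on $S^{[2]}$, and then the proof of Theorem \ref{thm:higherGauss} is repeated, with Kawamata--Viehweg on $S^{[2]}$ and on $\P(\Omega^1_S)$ replacing the Picard-rank-one inputs (Theorem \ref{thm:surjectivitygaussmapsK3} and Theorem \ref{thm:gounelasottem}), exactly as the paper does when it says the nef cone of $\P(\Omega^1_S)$ contains that of $S^{[2]}$. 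The one step you flag as a possible obstacle, bigness of the restriction $L+(k+2)\xi$, is not one: the restriction of the ample class $L-(k+2)\delta$ to $\P(\Omega^1_S)$ is ample, and even with only nefness in hand one computes $\bigl(L+(k+2)\xi\bigr)^3 = 6(k+2)\bigl(d-4(k+2)^2\bigr)>0$, so your argument closes with no extra work.
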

\begin{proof}
With the hypothesis in the Theorem, Corollaries \ref{cor:movablehighdegree} and \ref{cor:amplehighdegree} imply that the class $L-(k+2)\delta$ is ample. Since the restriction of nef divisors is nef we obtain that $(L+(k+2)\delta)|_{\P(\Omega^1_S)}$ is ample, hence the same proof as Theorem \ref{thm:higherGauss} yields the result, noticing that the vanishing of the $k$-th Gaussian map $\gamma_{L}^k$ for $L$ is a direct consequence of Kawamata-Viehweg vanishing and Theorem \ref{thm:vanish}.
\end{proof}

\section{Further remarks}\label{section:furtherremarks}

\subsection{Relation with Bott vanishing}

The proof of Theorem \ref{thm:higherGauss} also produces another criterion for the surjectivity of higher Gaussian maps of a canonical curve lying on a K3 surface depending solely on the vanishing of some cohomology groups attached to twisted symmetric differentials on $S$.

\begin{cor}\label{cor:bottvanishsym}
Let $(S,L)$ be a polarized K3 surface such that $\Pic(S) = \Z L$ and let $C$ be a smooth hyperplane section of $S$. If $H^1(S, \mathrm{Sym}^{k-1}\Omega^1_S(L))$ and $H^1(S, \mathrm{Sym}^{k}\Omega^1_S(L))$ vanish, then $\gamma^k_C$ is surjective.
\end{cor}
\begin{proof}
Going trough the proof of Theorem \ref{thm:higherGauss}, the claimed vanishings imply that the maps $p_1$ and $p_2$ in loc.cit. are surjective. Hence $\gamma^k_C$ is surjective as well.
\end{proof}
\begin{cor}\label{cor:bottvanishingsym2} 
Let $(S,L)$ be a polarized K3 surface of degree $2d\geq 34$ and $\Pic(S) = \Z L$. If $C$ is a smooth hyperplane section of $L$, then $\gamma^2_C$ is surjective if $H^1(S,\mathrm{Sym}^2\Omega^1_S(L))$ vanishes.
\end{cor}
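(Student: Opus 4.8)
The plan is to invoke Proposition \ref{prop:bottvanishsym}, which already reduces the surjectivity of $\gamma^2_C$ to the two vanishings $H^1(S,\mathrm{Sym}^{1}\Omega^1_S(L)) = 0$ and $H^1(S,\mathrm{Sym}^2\Omega^1_S(L)) = 0$. The second of these is precisely the hypothesis we are allowed to assume, so the entire content of the corollary is to establish the first one, namely $H^1(S,\Omega^1_S(L)) = 0$, under the numerical assumption $2d \geq 34$ (equivalently $d \geq 17$) together with $\Pic(S) = \Z L$. Once that is in hand, Proposition \ref{prop:bottvanishsym} with $k=2$ finishes the argument immediately.

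So the real task is the vanishing $H^1(S,\Omega^1_S(L)) = 0$. First I would translate it, exactly as in the proof of Theorem \ref{thm:higherGauss}, into a statement on $\P(\Omega^1_S)$: by degeneration of the Leray spectral sequence for $\mu\colon \P(\Omega^1_S)\to S$ together with $\mu_*\O_{\P(\Omega^1_S)}(\xi)\cong\Omega^1_S$, one has $H^1(S,\Omega^1_S(L))\cong H^1(\P(\Omega^1_S), L+\xi)$. Then I would write $L+\xi = (L+3\xi) + K_{\P(\Omega^1_S)}$, using that the relative canonical of $\P(\Omega^1_S)\to S$ over a K3 is $-2\xi$ (more precisely $K_{\P(\Omega^1_S)} = -2\xi + \mu^*K_S = -2\xi$), so that $L+3\xi$ is the class one needs to be big and nef in order to apply Kawamata–Viehweg. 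This is the $k=1$ case of the chain of comparisons in Theorem \ref{thm:higherGauss}: there the relevant class was $L+(k+2)\xi$, and here $k=1$ gives $L+3\xi$.

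The point where one must do a small computation is checking that $L+3\xi$ is big and nef on $\P(\Omega^1_S)$ when $d\geq 17$. Theorem \ref{thm:gounelasottem} (from \cite{GounelasOttem20}) states that $L+(k+2)\xi$ is big and nef as soon as $d\geq 4(k+2)^2 + \tfrac54$; for $k=1$ this requires $d\geq 4\cdot 9 + \tfrac54 = 37.25$, i.e. $d\geq 38$, which is far worse than $d\geq 17$. So the naive application of Theorem \ref{thm:gounelasottem} does not suffice, and I expect this to be the main obstacle: one needs the \emph{sharp} bound for big-and-nefness of $L+3\xi$ rather than the general quadratic estimate. The way around it is to use the sharp description of $\Nef(S^{[2]})$ from Theorem \ref{thm:bayermacrinef} together with the containment $\Nef(S^{[2]})\cap\mathrm{span}_\R(L,\delta)\subseteq \Nef(\P(\Omega^1_S))$ under $\delta\mapsto -\xi$: concretely, $L-3\delta$ big and nef on $S^{[2]}$ forces $L+3\xi$ big and nef on $\P(\Omega^1_S)$, and one checks via Theorem \ref{thm:bayermacrinef} (going through the Pell-equation cases, exactly as in the proof of Theorem \ref{thm:surjectivitygaussmapsK3}) that $L-3\delta$ lies in the interior of the nef cone precisely when $d\geq 17$. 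Alternatively, and perhaps more cleanly, one can run the direct cohomological argument: restrict $\Omega^1_S(L)$ to a smooth $C\in|L|$, use $H^0(S,\Omega^1_S)=0$ and the conormal sequence as in the proof of Theorem \ref{thm:higherGauss}, and reduce to $H^1(S,T_S(-L))=0$, which by Serre duality and Kobayashi's theorem $H^0(S,T_S)=0$ holds once the analogous big-and-nef condition is met; here the numerics again come down to $L-3\delta$ being nef, i.e. $d\geq 17$. I would present the nef-cone route since it is the shortest, flagging that the constant $17$ is exactly what the Bayer–Macrì classification yields.

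Finally I would assemble the pieces: $d\geq 17$ gives $L-3\delta$ big and nef on $S^{[2]}$, hence $L+3\xi$ big and nef on $\P(\Omega^1_S)$, hence by Kawamata–Viehweg $H^1(\P(\Omega^1_S),L+\xi)=0$, hence $H^1(S,\Omega^1_S(L))=0$; combined with the assumed vanishing of $H^1(S,\mathrm{Sym}^2\Omega^1_S(L))$, Proposition \ref{prop:bottvanishsym} with $k=2$ gives surjectivity of $\gamma^2_C$. The only genuinely delicate point, worth stating carefully, is the sharp threshold $d\geq 17$; everything else is a direct citation of results already in the paper.
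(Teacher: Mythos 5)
Your proposal is correct, but it departs from the paper at the one step that carries the numerical hypothesis. The paper's own proof is a one-liner: it quotes Totaro's Bott vanishing theorem \cite[Theorem 3.2]{Totaro20} to get $H^1(S,\mathrm{Sym}^{k-1}\Omega^1_S(L))=H^1(S,\Omega^1_S(L))=0$ (this citation is where the bound $2d\geq 34$ actually comes from) and then applies Proposition \ref{prop:bottvanishsym} with $k=2$, exactly as you do. You instead re-derive the vanishing $H^1(S,\Omega^1_S(L))=0$ inside the paper's own machinery: identify it with $H^1(\P(\Omega^1_S),L+\xi)$, write $L+\xi=(L+3\xi)+K_{\P(\Omega^1_S)}$, and apply Kawamata--Viehweg once $L+3\xi$ is big and nef, which you obtain by restricting $L-3\delta$ from $S^{[2]}$ using the sharp Bayer--Macr\`i description of $\Nef(S^{[2]})$ (Theorem \ref{thm:bayermacrinef}) rather than the quadratic bound of Theorem \ref{thm:gounelasottem}. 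This is a legitimate, self-contained alternative (essentially the $k=1$ instance of the argument in Theorem \ref{thm:higherGauss} with the sharp nef threshold), and it has the virtue of not invoking \cite{Totaro20} at all; the paper's route is shorter but outsources the vanishing. Two small repairs to your version: first, bigness of $L+3\xi$ does not follow formally from restricting a big class to a divisor, so add a line, e.g.\ nefness restricts and $(L+3\xi)^3=(L-3\delta)^3\cdot 2\delta=6\,q(L-3\delta)\,q(L-3\delta,\delta)=36(2d-18)>0$ by the Fujiki relations, so the nef class $L+3\xi$ is big. Second, your claim that $L-3\delta$ is big and nef \emph{precisely} when $d\geq 17$ is not accurate: running the three cases of Theorem \ref{thm:bayermacrinef} as in Theorem \ref{thm:surjectivitygaussmapsK3} shows the nef-cone slope exceeds $3$ already for much smaller $d$ (e.g.\ $d=10$, minimal solution $(19,6)$ of $x^2-10y^2=1$, slope $60/19>3$), so the threshold $2d\geq 34$ in the statement reflects Totaro's hypothesis, not your computation. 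Neither point affects the implication you actually need, namely $d\geq 17\Rightarrow L-3\delta$ big and nef, so your argument goes through.
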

\begin{proof}
Bott vanishing \cite[Theorem 3.2]{Totaro20} imply that $H^1(S,\Omega^1_S(L))$ vanishes. Hence by Corollary \ref{cor:bottvanishsym} the result follows.
\end{proof}

Proving the vanishing stated in the previous corollary will yield surjectivity of the second Gaussian map for general curves of all allowable genus, hence giving an alternative proof of the main result in \cite{CCM11}; obtaining a proof in this way would be interesting because for curves in K3 surfaces the surjectivity of $\gamma^1_C$  never occurs, as mentioned at the beginning of the paper.

On the other hand, in \cite{Totaro20}, Totaro raised the problem of characterizing those polarized K3 surfaces $(S,L)$ for which $H^1(S,\Omega^1_S(L))$ vanishes, such property was called \emph{Bott vanishing} in \emph{loc.cit.} Since symmetric powers of the cotangent bundle appear naturally for higher Gaussian maps, one may expect to have some relation between them as in Corollary \ref{cor:bottvanishingsym2}. Our methods might be pushed forward in order to understand Bott vanishing for symmetric products and to characterize those polarized K3 surfaces $(S,L)$ for which $H^i(S,\mathrm{Sym}^k\Omega^1_S(L))$ vanishes. We plan to come back to this question in the future.

\subsection{Explicit Linear Systems for Hilbert Schemes}

The results of this section were the motivation to describe the Gaussian maps on polarized K3 surfaces. For linear systems on $S^{[2]}$ of the form $L-a\delta$ with $a\leq 2$ we can precisely say whether these are base point free or very ample just in terms of the geometry of $S$. In the following propositions we will give geometric interpretations of the results in Propositions \ref{prop:l-delta} and \ref{prop:lminus2delta}.

\begin{prop}\label{prop:h-delta}
Let $(S,L)$ be a polarized K3 surface. The divisor $L-\delta$ is ample if and only if one of the following occurs:
\begin{itemize}
    \item $L$ is very ample and $(S,L)$ does not contain lines.
    \item $L = 2B$ with $B^2 = 2$ and there are no effective divisors $F$ such that $B\cdot F = 1$ and $F^2=0$. 
\end{itemize}
\end{prop}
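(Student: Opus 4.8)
The plan is to run the class $L-\delta$ through the Bayer--Macrì description of the ample cone in the form of Corollary~\ref{cor:kappaclasses} (taken with $a=1$), and then to translate the resulting walls back to $\Pic(S)$ using Lemmas~\ref{lem:alglineal-2} and~\ref{lem:alglineal-10}. Since $q_{S^{[2]}}(L-\delta)=L^2-2$, the case $L^2=2$ is immediate: $L-\delta$ is isotropic, hence not ample, and $L^2=2$ excludes both bullets. So assume $L^2=2d$ with $d\ge 2$, which is exactly the hypothesis $q(L-\delta)>0$ of the two Lemmas.

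Unwinding Corollary~\ref{cor:kappaclasses}: because $q(L-\delta,\,D-b\delta)=L\cdot D-2b$ and $q(L-\delta,\,2D-b\delta)=2(L\cdot D-b)$, the $(-2)$-walls (with $D^2=2b^2-2$) are controlled by $L\cdot D\le 2b$ and the $(-10)$-walls (with $D^2=(b^2-5)/2$) by $L\cdot D\le b$. Lemma~\ref{lem:alglineal-2} forces $b=1$ on the $(-2)$ side, so a $(-2)$-wall is exactly an effective $D$ with $D^2=0$ and $L\cdot D\le 2$; hence $L-\delta$ is movable iff $S$ has no such $D$. Lemma~\ref{lem:alglineal-10} forces $b$ odd with $b^2\le 5d/(d-1)$: for $d\ge 3$ only $b=1$ survives, giving an effective $(-2)$-curve of $L$-degree $1$, i.e.\ a line of $S$ in its $|L|$-image; for $d=2$ there is in addition $b=3$, giving an effective $D$ with $D^2=2$ and $L\cdot D=3$ (the value being forced by Hodge index). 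Thus it remains to show
\[
L-\delta\ \text{ample}\iff\bigl[\,\text{no effective }D\text{ with }D^2=0,\ L\cdot D\le 2\,\bigr]\ \text{and}\ [\,\text{no line}\,],
\]
and to identify the right-hand side with the two listed cases.

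Both the stray $b=3$ wall when $d=2$ and the comparison with very ampleness come down to the Riemann--Roch trick $M:=L-D$ on the K3. If $D$ is effective with $D^2=2$, $L\cdot D=3$ and $L^2=4$, then $M^2=0$, $L\cdot M=1$, and $\chi(\O_S(M))=2$; as $L$ is ample with $L\cdot M>0$, $M$ is effective, hence a forbidden $D^2=0$ class of $L$-degree $\le 2$. For the comparison, Knutsen's Theorem~\ref{thm:knutsen} with $k=1$ says $L$ is very ample iff there is no effective $D^2=0$ with $L\cdot D\le 2$ and no effective $D^2=2$ with $L\cdot D=4$ (the bound $L^2\ge 4$ being automatic). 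In the last case Hodge index gives $2L^2\le 16$: if $L^2\le 6$ the same trick rules out such a $D$; if $L^2\ge 10$ no such $D$ exists; and if $L^2=8$ then $(L-D)^2=2$, $L\cdot(L-D)=4$ and $L\cdot\bigl(D-(L-D)\bigr)=0$ force $D=L-D$, i.e.\ $L=2D$ with $D^2=2$. Hence, granting the bracketed conditions, either $L$ is very ample (first bullet), or $L=2B$ with $B^2=2$; and in the latter situation $L\cdot(-)$ is even so there is no line, while "no effective $D^2=0$ with $L\cdot D\le 2$" is literally "no effective $F$ with $F^2=0$, $B\cdot F=1$" (second bullet). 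The converses are immediate.

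The one genuinely delicate point is this reconciliation between the Bayer--Macrì wall description and Knutsen's numerical criterion: the two systems of inequalities are not literally the same, so one has to check that in the finitely many low-degree ranges where extra $(-10)$-walls or $D^2=2$ obstructions can occur (here $d\in\{2,3,4\}$) each such class is either already ruled out by movability or else pins down the exceptional polarization $L=2B$ with $B^2=2$ --- and the Riemann--Roch / Hodge-index bookkeeping above is precisely what closes that gap.
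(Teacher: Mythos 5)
Your proof is correct and follows essentially the same route as the paper: the isotropic case $L^2=2$, wall analysis via Corollary~\ref{cor:kappaclasses} together with Lemmas~\ref{lem:alglineal-2} and~\ref{lem:alglineal-10} (only $b=1$ for $(-2)$-walls, $b\in\{1,3\}$ for $(-10)$-walls with $b=3$ occurring only at $d=2$), and disposal of the $d=2$, $b=3$ class by showing $L-D$ is an effective isotropic class of $L$-degree $1$ — the paper makes the same computation, phrased as ``$L$ is not very ample, hence $L-\delta$ is not movable.'' The only deviation is that where the paper quotes Saint-Donat for the dichotomy ``no effective $D$ with $D^2=0$, $L\cdot D\le 2$ iff $L$ is very ample, except $L=2B$ with $B^2=2$,'' you re-derive it from Knutsen's Theorem~\ref{thm:knutsen} with $k=1$ plus Hodge index and Riemann--Roch (isolating the $D^2=2$, $L\cdot D=4$ obstruction and pinning it to $L=2B$ at $L^2=8$), which is a harmless, self-contained variant of the same input.
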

\begin{proof}

By Proposition \ref{prop:l-delta} there are no effective divisors $D$ in $S$ such that $D^2 = 0$ and $L\cdot D\leq 2$ if and only if $L-\delta$ is in the interior of the movable cone. By the case $k=1$ in Theorem \ref{thm:knutsen}, we have that this is equivalently to $L$ being very ample or else be of the form $2B$ with $B^2=2$ and such that there are no effective divisors $F$ such that $B\cdot F = 1$ and $F^2=0$.

A line in the projective embedding given by $L$ is a rational curve $D$ in $S$ with $L\cdot D = 1$, and by Riemann-Roch $D^2 = -2$, this is precisely the second case in Proposition \ref{prop:l-delta}.
\end{proof}

Let $(S,L)$ be a polarized K3 surface. Consider the rational map 
\[
\begin{tikzcd}
\varphi_1: S^{[2]}\ar[r, dashed] &  \mathrm{Gr}(2,H^0(S,L))
\end{tikzcd}
\]
given by associating to $Z\in S^{[2]}$ its linear span. If $L$ is base-point free, then by \cite[Proposition 6.9]{AproduNagel10} we have $L-\delta = \varphi_1^*(\O_{\mathrm{Gr}(2,H^0(S,L))}(1)).$
Moreover, if $L$ is very ample, then $\varphi_1$ is a morphism \cite{CatGoe90}. In the second case of Proposition \ref{prop:h-delta} the divisor $2B$ is not very ample and in fact it defines a $2:1$ cover onto the Veronese surface, hence $\varphi_1$ is not defined on a $\P^2$ given by the covering involution.

\begin{cor}
Suppose $L-\delta$ is ample. Then $L-\delta$ is base point free if and only if $L$ is not of the form $L = 2B$, with $B^2 = 2$.
\end{cor}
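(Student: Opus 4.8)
The plan is to leverage the ampleness criterion already established in Proposition \ref{prop:h-delta} and the geometric description of the linear system $|L-\delta|$ via the span map $\varphi_1$. First I would recall that when $L-\delta$ is ample, Proposition \ref{prop:h-delta} tells us we are in exactly one of two cases: either $L$ is very ample with no lines on $(S,L)$, or $L = 2B$ with $B^2 = 2$ and no effective $F$ with $B\cdot F = 1$, $F^2 = 0$. The strategy is to treat these two cases separately and show base point freeness fails precisely in the second.

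For the first case, where $L$ is very ample, I would invoke the result of Catanese--Göttsche \cite{CatGoe90}: when $L$ is very ample on $S$, the span map $\varphi_1 : S^{[2]}\to \mathrm{Gr}(2,H^0(S,L))$ associating to a length-two subscheme $Z$ its linear span $\langle Z\rangle\subseteq \P(H^0(S,L)^\vee)$ is everywhere defined (a morphism), since any two points, or a length-two fat point, of a projectively normal embedded surface span a unique line. Since $\varphi_1$ is the morphism attached to (a sub-linear-system of) $|L-\delta|$ and it has no indeterminacy, the linear system $|L-\delta|$ itself is base point free — a base point of the complete linear system would force the Plücker pullback to be undefined there. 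Hence base point freeness holds whenever $L\neq 2B$.

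For the converse, suppose $L = 2B$ with $B^2 = 2$, so $(S,B)$ is a double cover $f : S\to \P^2$ branched over a sextic, and $L = f^*\O_{\P^2}(2)$ embeds $S$ (two-to-one) onto the Veronese surface $v_2(\P^2)\subset \P^5$. The covering involution $\iota$ of $f$ acts on $S^{[2]}$, and the locus of $\iota$-invariant length-two subschemes of the form $\{x,\iota(x)\}$ (together with their degenerations along the ramification) sweeps out a copy of $\P^2$ inside $S^{[2]}$ — indeed this is the $\P^2$ noted in the discussion after Proposition \ref{prop:h-delta}. For such a $Z = \{x,\iota(x)\}$ the images $f(x)=f(\iota(x))$ coincide in $\P^2$, so the two points of $Z$ map to the \emph{same} point of the Veronese surface; their span under the composed embedding $S\hookrightarrow \P^5$ is not well-defined (it degenerates), which means $\varphi_1$ is not a morphism along this $\P^2$. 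Any section of $L-\delta$ vanishes identically on this $\P^2$ — because $(L-\delta)\cdot (\text{line in this }\P^2) $, computed via $q_{S^{[2]}}$, comes out strictly negative, so the restriction of $L-\delta$ to the $\P^2$ is not effective — and therefore every point of this $\P^2$ is a base point of $|L-\delta|$.

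The main obstacle I anticipate is making the last paragraph fully rigorous: precisely identifying the embedded $\P^2\subset S^{[2]}$ coming from the covering involution, and verifying the negativity of $(L-\delta)$ restricted to its lines via the Beauville--Bogomolov form (one must check $\mathrm{div}$ and the lattice embedding carefully, using that $L = 2B$ with $q_{S^{[2]}}(\delta) = -2$ and $q_{S^{[2]}}(B\pm\delta) $ controls the relevant intersection numbers). Once that negativity is in hand, the conclusion that the whole $\P^2$ lies in the base locus is immediate. Conversely, packaging the Catanese--Göttsche morphism statement to conclude base point freeness of the \emph{complete} system $|L-\delta|$ — rather than just of the sub-system defining $\varphi_1$ — requires noting that $\varphi_1$ is itself given by the complete linear system $|L-\delta|$ when $L$ is very ample, which is again in \cite{CatGoe90}.
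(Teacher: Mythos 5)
Your first half is essentially the paper's argument and is fine: when $L$ is very ample the span map $\varphi_1$ is a morphism by \cite{CatGoe90}, and since $H^0(S^{[2]},L-\delta)\cong\bigwedge^2H^0(S,L)$ (this follows from Corollary \ref{cor:decompositioncohomology} together with the equivariant splitting, which also settles your worry about complete versus partial linear system), base point freeness follows because for every $Z$ the evaluation map $H^0(S,L)\to H^0(L\otimes\O_Z)$ is onto, so some $2\times 2$ minor, i.e.\ some section of $L-\delta$, is nonzero at $Z$.

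The second half, however, contains a genuine error. You claim that for a line $\ell$ in the plane of fibers one has $(L-\delta)\cdot\ell<0$, so that $(L-\delta)|_{\P^2}$ is not effective. This cannot happen under the standing hypothesis that $L-\delta$ is ample, and it is in fact false: $\ell$ is the $g^1_2$ on a genus $2$ curve $C=f^{-1}(m)\in|B|$, so $L\cdot\ell=2(B\cdot C)=2B^2=4$, while $2\delta\cdot\ell$ equals the number of ramification points of $C\to\P^1$, namely $6$, giving $(L-\delta)\cdot\ell=4-3=1$; the restriction of $L-\delta$ to the plane is $\O_{\P^2}(1)$, which is ample and effective. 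The correct mechanism for the base locus — and the one the paper uses via $\varphi_1$ — is rank drop of evaluation, not negativity: for $Z=f^{-1}(p)$ a fiber of the $2:1$ map onto the Veronese surface, the two points of $Z$ have the same image under $\varphi_{|L|}$, so the evaluation map $H^0(S,L)\to H^0(L\otimes\O_Z)$ has rank $1$; hence every section $s\wedge t\in\bigwedge^2H^0(S,L)\cong H^0(S^{[2]},L-\delta)$ vanishes at $Z$, and the whole plane lies in the base locus (equivalently, $\varphi_1$ is undefined there). Note there is no contradiction with effectivity of $(L-\delta)|_{\P^2}$: the restriction map $H^0(S^{[2]},L-\delta)\to H^0(\P^2,\O_{\P^2}(1))$ is simply the zero map. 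With this replacement your argument closes the gap and coincides with the paper's.
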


In a similar way it can be proved an analogue of Proposition \ref{prop:h-delta} when we consider the class $L-2\delta$. This essentially corresponds to $\gamma^0_{L}$.

\begin{prop}\label{prop:h-2delta}
Let $(S,L)$ be a polarized K3 surface. The divisor $L-2\delta$ is ample if and only if one of the following occurs:
\begin{itemize}
    \item $L$ is $3$-very ample.
    \item $L=2B$ with $B^2 = 4$ and there are no effective divisors $E,F$ such that $E^2 = -2$ and $B\cdot E =1$  or $F^2 = 0$ and  $B\cdot F = 2$. 
\end{itemize}
\end{prop}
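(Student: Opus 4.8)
The plan is to run the proof of Proposition \ref{prop:h-delta} again, now with $a=2$: use Corollary \ref{cor:kappaclasses} together with Lemmas \ref{lem:alglineal-2} and \ref{lem:alglineal-10} (and, in the range where they apply, Lemmas \ref{lem:movablehighdegree} and \ref{lem:amplehighdegree}) to get a numerical criterion for $L-2\delta$ to be ample, and then match that criterion against Knutsen's characterization of $3$-very ampleness, Theorem \ref{thm:knutsen} with $k=3$. First I would dispose of the degenerate range: if $L^2\le 8$, i.e. $d\le 4$, then $q_{S^{[2]}}(L-2\delta)=L^2-8\le 0$, so $L-2\delta$ does not lie in $\mathrm{Pos}(S^{[2]})$ and cannot be ample; on the other side, $L$ cannot be $3$-very ample (Knutsen requires $L^2\ge 12$) and cannot be of the form $2B$ with $B^2=4$ (that forces $L^2=16$). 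So both sides of the asserted equivalence are false there, and we may assume $d\ge 5$, so that $L-2\delta$ lies in the component of the positive cone containing ample classes and Corollary \ref{cor:kappaclasses} applies.

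Next I would analyze the obstructing classes. By Corollary \ref{cor:kappaclasses}(1) a class $\kappa=D-b\delta$ obstructs movability of $L-2\delta$ precisely when $q(\kappa)=-2$, $D$ is effective and $q(L-2\delta,\kappa)\le 0$; Lemma \ref{lem:alglineal-2} and the Hodge-index bound of Remark \ref{rem:boundD2} force $b=1$ (hence $D^2=0$, $0<L\cdot D\le 4$) unless $d$ is small, and in the small-$d$ cases one rewrites $L-D$ to reduce the higher-$b$ solutions to the $D^2=0$ case, exactly as in the $L^2=4$ analysis of Proposition \ref{prop:h-delta}. Likewise, by Corollary \ref{cor:kappaclasses}(2) a $(-10)$-class $\kappa=2D-b\delta$ obstructs the remaining ampleness, and Lemma \ref{lem:alglineal-10} together with Remark \ref{rem:boundD2} leaves only $b=1$ (so $D^2=-2$, $0<L\cdot D\le 2$), or $b=3$ (so $D^2=2$, $5\le L\cdot D\le 6$), or larger $b$ that occur only for very small $d$; the $D^2=2$ solutions either reduce via $L-2D$ to a $D^2=0$ movability obstruction or else match the $D^2=2$ clause of Theorem \ref{thm:knutsen} verbatim. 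Assembling this, $L-2\delta$ is ample if and only if $L^2>8$ and there is no effective $D$ with $D^2=0$ and $L\cdot D\le 4$, none with $D^2=-2$ and $L\cdot D\le 2$, and none with $D^2=2$ and $5\le L\cdot D\le 6$. By Theorem \ref{thm:knutsen} this is exactly the statement that $L$ is $3$-very ample, \emph{except} for the one divisor $D$ with $D^2=4$ and $L\cdot D=8$ which Knutsen's inequality also forbids: the Hodge index gives $L^2\le 16$, and then $L$ ample forces $L=2D$ with $D^2=4$, $L^2=16$. This borderline divisor does not obstruct $L-2\delta$, which is why the case $L=2B$, $B^2=4$ must be singled out.

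It then remains to treat $L=2B$ with $B^2=4$ directly (so $d=8$, $L^2=16$, and $L$ is never $3$-very ample). Here Lemma \ref{lem:alglineal-2} with $a=2$, $d=8$ leaves only $\kappa=F-\delta$ with $F^2=0$ and $0<L\cdot F\le 4$, i.e. $B\cdot F\in\{1,2\}$; and if $B\cdot F=1$ then $F':=B-2F$ is effective with $F'^2=0$ and $B\cdot F'=2$, so the movability obstruction is equivalent to the existence of an effective $F$ with $F^2=0$, $B\cdot F=2$. Lemma \ref{lem:alglineal-10} leaves $\kappa=2E-\delta$ with $E^2=-2$ and $B\cdot E=1$, together with $\kappa=2D-3\delta$ with $D^2=2$, $B\cdot D=3$; in the latter case $B-D$ is effective with $(B-D)^2=0$ and $B\cdot(B-D)=1$, which already destroys movability, so the only genuinely new obstruction to ampleness is an effective $E$ with $E^2=-2$, $B\cdot E=1$ — precisely the two conditions in the second alternative of the statement. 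The main obstacle is therefore entirely bookkeeping: correctly tracking the borderline class $D^2=4$, $L\cdot D=8$ through Knutsen's criterion, recognizing that it corresponds to the separate case $L=2B$, $B^2=4$ rather than to a true obstruction, and then checking which $(-2)$- and $(-10)$-classes survive the Hodge-index reductions in that case; no new geometric input beyond Theorem \ref{thm:knutsen} and Lemmas \ref{lem:alglineal-2}, \ref{lem:alglineal-10} is needed.
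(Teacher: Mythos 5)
Your overall strategy (run the Proposition \ref{prop:h-delta} argument with $a=2$ via Corollary \ref{cor:kappaclasses} and Lemmas \ref{lem:alglineal-2}, \ref{lem:alglineal-10}, then match against Knutsen's criterion for $3$-very ampleness) is exactly what the paper intends, but as written there is a genuine gap at degree $10$, i.e.\ $d=5$. You dismiss the $(-10)$-classes with ``larger $b$'' as occurring ``only for very small $d$'' and never analyze them, and your assembled criterion (``ample iff $L^2>8$ and no effective $D$ with $D^2=0,\ L\cdot D\le 4$, or $D^2=-2,\ L\cdot D\le 2$, or $D^2=2,\ 5\le L\cdot D\le 6$'') is false precisely there: for $d=5$ Lemma \ref{lem:alglineal-10} allows $b=5$ (note $b^2=25=\tfrac{5d}{d-4}$ and $\tfrac{b^2-5}{2}=10$ is even), and the corresponding class $\kappa=2L-5\delta$ has $q(\kappa)=-10$, divisibility $2$, $D=L$ effective, and $q(L-2\delta,\kappa)=2L^2-20=0$. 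So on \emph{every} polarized K3 of degree $10$ the divisor $L-2\delta$ fails to be ample (for $\Pic(S)=\Z L$ this matches Theorem \ref{thm:bayermacrinef}(2), since $x^2-20y^2=5$ has the solution $(5,1)$ and $L-2\delta$ spans a boundary ray of the nef cone), whereas a general degree-$10$ K3 carries none of the divisors on your list, so your criterion would declare $L-2\delta$ ample. Relatedly, your matching with Theorem \ref{thm:knutsen} silently ignores Knutsen's requirement $L^2\ge 4k=12$: your ``$L^2>8$'' is strictly weaker, and the discrepancy is exactly the degree-$10$ case. The fix is easy (handle $d=5$ separately via the class $2L-5\delta$, and note $L^2\ge 12$ automatically once $d\ge 6$), but it is a step your argument currently gets wrong rather than merely omits.

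A secondary, patchable inaccuracy: when comparing with Knutsen's clause $D^2=4$, $L\cdot D=8$, you claim the Hodge index ``gives $L^2\le 16$, and then $L$ ample forces $L=2D$.'' Proportionality only follows at the equality case $L^2=16$; for $L^2\in\{10,12,14\}$ such a $D$ can exist with $L\ne 2D$, and to keep the equivalence you must check that then one of your listed obstructions is also present (it is: $L-D$ is effective with $(L-D)^2=-2,0,2$ and $L\cdot(L-D)=2,4,6$ respectively, so it obstructs), a reduction of the same kind you perform elsewhere but omit here. The final case analysis for $L=2B$, $B^2=4$ is correct.
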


In both cases we have that $(S,L)$ is generated by quadrics. The linear system $|L-2\delta|$ is given by the morphism
\[
\begin{tikzcd}
\varphi_2: S^{[2]}\ar[r] & \P(H^0(\P^{d+1} , \mathcal{I}_S(2))^\vee)
\end{tikzcd}
\]
that associates to each $Z\in S^{[2]}$ the hyperplane of quadrics containing $S$ and the line spanned by $Z$; since $(S,L)$ is generated by quadrics, we have that $\varphi_2$ is always defined. Moreover it can be shown that whenever $L$ is $3$-very ample we have that $\varphi_2$ is an embedding. In the second case of Proposition \ref{prop:h-2delta} the numerical condition is the same as requiring that $B$ is very ample and $(S,B)$ does not contain lines. 

\begin{cor}
If $L-2\delta$ is ample, then is base point free. It is very ample if and only if it is not of the form $L = 2B$, with $B^2 = 4$.
\end{cor}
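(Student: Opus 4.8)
The plan is to read both assertions off the morphism $\varphi_2$ attached to $|L-2\delta|$ that was set up before the corollary: under the identification $H^0(S^{[2]},L-2\delta)\cong H^0(\P^{d+1},\I_S(2))$ it sends $Z=\{p,q\}$ to the space of quadrics through $S$ whose polar bilinear form vanishes on the line spanned by $Z$. By Proposition \ref{prop:h-2delta}, ampleness of $L-2\delta$ puts us in one of two cases, and I would first check that in \emph{both} of them $(S,L)$ is generated by quadrics: in the first case $3$-very ampleness forces $L$ very ample, hence $(S,L)$ projectively normal and cut out by quadrics by Saint-Donat \cite{SD1974}; in the second $L=2B$ with $B$ very ample and $(S,B)$ without lines, so $\phi_L=v_2\circ\phi_B$ embeds $S$ into $v_2(\P^3)\subset\P^9$, and the Saint-Donat exceptions do not occur (there is no elliptic $E$ with $L\cdot E\le 3$, as this would force $B\cdot E=1$, impossible for very ample $B$; and $L$ is not twice a class of self-intersection $2$). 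Hence $\varphi_2$ is everywhere defined, i.e. $|L-2\delta|$ is base point free, which is the first assertion.

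For very ampleness I would split along the two cases of Proposition \ref{prop:h-2delta} (recall they partition, and $L=2B$ with $B^2=4$ is the second). In the first case, $L$ being $3$-very ample, the aim is to show $\varphi_2$ is a closed immersion, i.e. that the quadrics through $S$ separate points and tangent vectors of $S^{[2]}$. The mechanism: a $3$-very ample $L$ imposes independent conditions along every length-$\le 4$ subscheme of $S$; combined with $(S,L)$ being cut out by quadrics, this produces, for any two distinct length-$\le 2$ subschemes $Z_1,Z_2$ (and for any first-order deformation of one of them), a quadric through $S$ vanishing in the polar sense along the secant line of $Z_1$ but not along that of $Z_2$. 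This is the $|L-2\delta|$-analogue of the Catanese--G\"ottsche criterion \cite{CatGoe90} relating very ampleness of $|L-\delta|$ to $2$-very ampleness, and it is the technical heart of the statement.

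In the second case, $L=2B$ with $B^2=4$, I would prove $L-2\delta$ is \emph{not} very ample by writing $L-2\delta=2(B-\delta)$ and studying the span map. Since $B$ is very ample and $(S,B)$ has no lines, the corollary following Proposition \ref{prop:h-delta} gives that $|B-\delta|$ is base point free, with associated morphism $\varphi_1\colon S^{[2]}\to\mathrm{Gr}(2,H^0(S,B))\cong\mathrm{Gr}(2,4)$ carrying $\{p,q\}$ to the line $\overline{pq}$ in $\P^3=\P(H^0(S,B)^\vee)$. As $S\subset\P^3$ is a smooth quartic without lines, a general line of $\P^3$ meets $S$ in four distinct points, so $\varphi_1$ is finite of degree $\binom{4}{2}=6$; in particular $B-\delta$ is not very ample and $\O_{S^{[2]}}(2(B-\delta))=\varphi_1^{*}\O_{\mathrm{Gr}}(2)$. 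A dimension count gives $h^0(S^{[2]},L-2\delta)=\binom{d+3}{2}-h^0(S,2L)=h^0(\mathrm{Gr}(2,4),\O(2))+1$, so $|L-2\delta|$ is $\varphi_1^{*}|\O_{\mathrm{Gr}}(2)|$ enlarged by a single section; restricting to a general $6$-element fibre of $\varphi_1$, which $\varphi_1$ collapses to one point of $\mathrm{Gr}(2,4)$, one sees that the whole fibre is sent into a single projective line and that the one extra coordinate does not separate the fibre or all of its tangent directions. Hence the morphism of $|L-2\delta|$ is not an embedding, giving the ``only if'' direction; together with the first case this finishes the proof.

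The main obstacle is the separation argument of the second paragraph: turning $3$-very ampleness of $L$ into an embedding of the $4$-fold $S^{[2]}$ requires a careful analysis of how the quadrics containing $S$ restrict to its secant lines and their first-order neighbourhoods, which is exactly the ``it can be shown that $\varphi$ is an embedding'' alluded to after Proposition \ref{prop:h-2delta}. By contrast, in the case $L=2B$ the failure of very ampleness comes cheaply from the $6:1$ span map $\varphi_1$, the only genuine bookkeeping being the control of the single extra section of $|2(B-\delta)|$.
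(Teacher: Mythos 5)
Your first assertion (base point freeness) follows the paper's own route: ampleness of $L-2\delta$ puts you in one of the two cases of Proposition \ref{prop:h-2delta}, in both cases $(S,L)$ is cut out by quadrics, hence $\varphi_2$ is everywhere defined. (Strictly, you also need that $\phi_L(S)$ contains no lines -- otherwise any $Z$ spanning such a line is a base point -- but this holds in both cases, by $3$-very ampleness in the first and by parity of $L\cdot D=2B\cdot D$ in the second; the paper is equally silent on this.) For the ``if'' direction of very ampleness you, like the paper, only assert that $3$-very ampleness makes $\varphi_2$ an embedding; since the paper itself says no more than ``it can be shown'', this is not a divergence, but be aware that neither your text nor the paper's actually proves that step.

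The genuine gap is in your ``only if'' argument for $L=2B$, $B^2=4$, which is new relative to the paper (the paper gives no justification for this direction). The setup is fine: $L-2\delta=2(B-\delta)$, the span map $\varphi_1\colon S^{[2]}\to \mathrm{Gr}(2,4)$ is finite of degree $6$, and $h^0(S^{[2]},L-2\delta)=21=h^0(\mathrm{Gr}(2,4),\O(2))+1$. But the decisive sentence -- ``one sees that \dots the one extra coordinate does not separate the fibre or all of its tangent directions'' -- is asserted, not proved, and it does not follow from the count: a single extra coordinate can perfectly well take six distinct values on six points of a line, so nothing you have written rules out injectivity. The correct mechanism is finer. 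For a general fibre, the four points $p_1,\dots,p_4=\lambda\cap S'$ map to the conic $v_2(\lambda)$, which spans a plane $\Pi\subset\P^9$; every quadric $Q\supset S$ cuts $\Pi$ in a conic through the four points $v_2(p_i)$, i.e.\ in a member of a pencil, and the only member of that pencil containing the secant line of $Z_{12}=\{p_1,p_2\}$ is the line pair $\overline{p_1p_2}+\overline{p_3p_4}$. Hence the evaluation functionals of $H^0(L-2\delta)$ at $Z_{12}$ and at the complementary $Z_{34}$ both factor through the two-dimensional pencil and have the same kernel there, so they are proportional, i.e.\ $\varphi_{|L-2\delta|}(Z_{12})=\varphi_{|L-2\delta|}(Z_{34})$. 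Equivalently, the whole system $|L-2\delta|$ is invariant under the Beauville involution $Z\mapsto(\ell_Z\cap S)\smallsetminus Z$, and this identification of complementary pairs -- not the dimension count -- is why very ampleness fails. With that substitution your case closes; the $3$-very ample case remains, as in the paper, at the level of an unproved claim.
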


Let us suppose that $L-3\delta$ is ample, so in particular $(S,L)$ is generated by quadrics. Our identifications in Section \ref{section:gaussianandhilbertscheme} yield that the space of global sections of the divisor $L-3\delta$ is the kernel of the Wahl map:
\[
H^0(S^{[2]}, L-3\delta) = \ker ( \bigwedge^2 H^0(S,L)\to H^0(S, \Omega^1_S(2L))
\]
One can show that $H^0(S^{[2]}, L-3\delta) \cong H^1(\P^{d+1}, \mathcal{I}_S^2(2))$.

\begin{qsn}
Is there an analogous geometric description of $|L-3\delta|$? Is it true that $|L-3\delta|$ is always very ample?
\end{qsn}

As a final remark, it would be interesting to investigate the relation between the so-called higher fundamental forms described in \cite{GriffithsHarris79} and global sections of divisors of the form $L-a\delta$, whenever this class is ample. 

\appendix

\section{Equivariant Sheaves}\label{section:equivariantsheaves}

In this appendix we will collect the basic theory concerning $G$-equivariant sheaves. For more details, the reader can consult \cite{BKR01}.

Let $X$ be a projective variety together with an action of a finite group $G$. Denote by $e\in G$ the identity element. A $G$-equivariant sheaf will be a sheaf such that the action of $G$ on $X$ \emph{lifts}. We give the formal definition below.

\begin{defn}
A $G$-equivariant sheaf on $X$ is a coherent sheaf $\F$ on $X$ together with isomorphisms $\lambda_g:\F\to g^*\F$ for every $g\in G$ such that $\lambda_{e} = \text{Id}$ and  $\lambda_{hg} = g^*(\lambda_h)\circ \lambda_g$ for every $g,h\in G$. 
\end{defn}

The morphisms $\lambda_g$ are also called \emph{linearizations}.

\begin{exmp}
Let $\rho: G\to \text{GL}(V)$ be a representation of $G$. For every $G$-equivariant sheaf $\F$ we define $\F\otimes \rho$ as the sheaf $\F\otimes_{\C} V$ together with linearizations $s\otimes v\mapsto \lambda_g(s)\otimes \rho(g)(v)$. Notice that the sheaf $\F\otimes_\C V$ is still G-equivariant.
\end{exmp}

If $G$ acts trivially on $X$, then a $G$-equivariant sheaf is a sheaf with an action of $G$. In particular, if $\F$ is a $G$-equivariant sheaf on $X$ and $\pi: X\to X/G$ the quotient morphism, then $\pi_*\F$ is naturally a $G$-equivariant sheaf for the trivial action of $G$ in $X/G$. 

\begin{defn}
Let $\F$ be a $G$-equivariant sheaf on $X$ and $\pi:X\to X/G$ the quotient morphism. The \emph{sheaf of invariants} of $\F$, denoted by $\F^G$ is defined for every open $U\subset X/G$ as
\[
\F^G(U) = H^0(U,\pi_*\F)^G = \{ s\in H^0(U,\pi_*\F) : g\cdot s = s \}
\]
\end{defn}

\begin{rem}\label{rem:sheafofinvariants}
Notice that the sheaf of invariants can only be defined as a sheaf over the quotient $X/G$ and not over $X$. However for every sheaf $\F'$ on $X/G$, the sheaf $\pi^*\F'$ is a $G$-equivariant sheaf on $X$ and we have that \cite[Section 4.2]{BKR01} 
\[
(\pi_*(\pi^*\F'))^G\cong \F'
\]
\end{rem}

\begin{lem}\label{lem:equivsheafsplits}
Let $\F$ be a $G$-equivariant sheaf over $X$. If $G$ acts trivially over $X$, then $\F$ splits
\begin{equation}\label{eq:equivsheafsplits}
\F \cong \bigoplus_\rho \F_\rho\otimes\rho,
\end{equation}
where $\rho$ are the ireducible representations of $G$. This decomposition is unique up to ordering of the factors. In particular, the subsheaf of invariants $\F^G$ corresponds to the trivial part in the decomposition.
\end{lem}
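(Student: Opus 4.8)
\emph{Plan of proof.} The plan is to reduce the statement to the classical isotypic (Wedderburn) decomposition of a module over the group algebra $\C[G]$, carried out at the level of sheaves by means of central idempotents. Since $G$ acts trivially on $X$, a $G$-equivariant sheaf is nothing but a coherent $\O_X$-module $\F$ equipped with a group homomorphism $G\to\mathrm{Aut}_{\O_X}(\F)$; equivalently, $\F$ is a module over the constant sheaf of $\C$-algebras $\underline{\C[G]}$ on $X$, with all structure maps $\O_X$-linear. By Maschke's theorem $\C[G]$ is semisimple, so for each irreducible representation $\rho$, with underlying space $V_\rho$ and character $\chi_\rho$, one has the central idempotent
\[
e_\rho \;=\; \frac{\dim V_\rho}{|G|}\sum_{g\in G}\overline{\chi_\rho(g)}\,g\;\in\;\C[G],
\]
and the $e_\rho$ form a complete orthogonal system: $e_\rho e_{\rho'}=\delta_{\rho\rho'}e_\rho$ and $\sum_\rho e_\rho=1$. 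Acting on $\F$ by $\O_X$-linear endomorphisms, they produce a direct sum decomposition $\F\cong\bigoplus_\rho\F^{(\rho)}$, where $\F^{(\rho)}$ is the image of the endomorphism $e_\rho$; each $\F^{(\rho)}$ is a direct summand of the coherent sheaf $\F$, hence coherent, and the decomposition is $G$-equivariant by construction.

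Next I would identify $\F^{(\rho)}$ with $\F_\rho\otimes_\C V_\rho$ for a plain (non-equivariant) coherent sheaf $\F_\rho$, where $G$ acts on the right-hand side through the second factor as in the Example preceding the statement. On $\F^{(\rho)}$ the $\underline{\C[G]}$-action factors through the simple Wedderburn factor $\mathrm{End}_\C(V_\rho)$; setting $\F_\rho:=\mathcal{H}om_{\underline{\C[G]}}(V_\rho,\F)=\mathcal{H}om_{\underline{\C[G]}}(V_\rho,\F^{(\rho)})$ (which is coherent, being the image of a suitable idempotent of $\mathrm{End}_\C(V_\rho)$ acting on $\F^{(\rho)}$), I claim the evaluation morphism $\F_\rho\otimes_\C V_\rho\to\F^{(\rho)}$ is an isomorphism of $\O_X$-modules. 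This I would check on stalks: each $\F_x$ is a finite $\C[G]$-module, for which $e_\rho\F_x\cong\mathrm{Hom}_{\C[G]}(V_\rho,\F_x)\otimes_\C V_\rho$ is the standard Morita statement for the simple algebra $\mathrm{End}_\C(V_\rho)$; naturality of this isomorphism in the module guarantees compatibility with the restriction maps of $\F$, so it globalizes.

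For uniqueness, Schur's lemma gives $\mathrm{Hom}_{\C[G]}(V_\rho,V_{\rho'})=0$ for $\rho\neq\rho'$ and $\cong\C$ for $\rho=\rho'$; hence applying $\mathcal{H}om_{\underline{\C[G]}}(V_\rho,-)$ to any decomposition of the form $\bigoplus_{\rho'}\mathcal{G}_{\rho'}\otimes_\C V_{\rho'}$ recovers the multiplicity sheaf $\mathcal{G}_\rho$, so the $\F_\rho$ are canonical up to isomorphism and the summands are determined up to ordering of the factors. Finally, for the trivial representation the idempotent $e_{\mathrm{triv}}=\frac{1}{|G|}\sum_{g\in G}g$ is the Reynolds (averaging) operator, whose image is precisely the subsheaf of $G$-invariants; since $G$ acts trivially on $X$ we have $X/G=X$, and therefore $\F^G=\F^{(\mathrm{triv})}$ is exactly the trivial summand of the decomposition.

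The only step carrying genuine content beyond pure representation theory — and the one I would single out as the main obstacle — is the passage from $\C[G]$-modules to sheaves of $\underline{\C[G]}$-modules: one must verify that the module-theoretic decomposition is functorial enough to glue over $X$. Because every operator involved (the idempotents $e_\rho$ and the evaluation maps) is induced by the fixed finite-dimensional algebra $\C[G]$ and commutes with $\O_X$-linear localization, this gluing is routine; but it is where the word ``sheaf'' in the statement must be taken seriously.
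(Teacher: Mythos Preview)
Your proof is correct and is precisely the standard unfolding of the paper's one-line argument ``Is a consequence of Schur's Lemma (see \cite{BKR01})'': you make explicit the isotypic decomposition via the central idempotents of $\C[G]$ and the Morita identification of each isotypic component with a multiplicity sheaf tensored by $V_\rho$, with Schur's Lemma supplying uniqueness. One harmless slip: the stalks $\F_x$ are not \emph{finite} $\C[G]$-modules in any meaningful sense (they are typically infinite-dimensional over $\C$), but the identification $e_\rho M\cong\mathrm{Hom}_{\C[G]}(V_\rho,M)\otimes_\C V_\rho$ holds for arbitrary modules over the semisimple algebra $\C[G]$, so the stalkwise check goes through unchanged.
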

\begin{proof}
This is a consequence of Schur's Lemma (see \cite{BKR01}).
\end{proof}

\bibliography{bib.bib}{}
\bibliographystyle{alpha}

\end{document}